\documentclass[final,10pt]{siamltex}
\usepackage[latin1]{inputenc}
\usepackage{amsmath,amstext,amsbsy,amsopn,amscd,amsxtra,upref,amssymb}
\usepackage{amsfonts}
\usepackage{graphicx}
\usepackage{algorithmic}
\usepackage{algorithm}
\usepackage[mathscr]{euscript}
\usepackage{pgf,tikz}
\usetikzlibrary{arrows}
\newtheorem{remark}[theorem]{Remark}

\newcommand{\Id}{\mathbb I}
\newcommand{\SO}{\mathbb S}
\newcommand{\WSO}{\mathscr S}

\newcommand{\WU}{\widehat{U}}

\newcommand{\D}{\mathbb D}
\newcommand{\f}{\mathbf f}

\newcommand{\0}{\mathbf 0}
\newcommand{\R}{\mathbb R}
\newcommand{\Prj}{\mathcal P}
\newcommand{\WP}{\widehat{{\mathcal P}}}

\newcommand{\mat}[1]{{#1}}

\definecolor{bleu1}{RGB}{0,51,128}


\title{The Discrete Empirical Interpolation Method: Canonical Structure and Formulation in
Weighted Inner Product Spaces\thanks{The work of the first author was supported by grant
		HRZZ-9345 from the Croatian Science Foundation. 	}}

\author{Zlatko Drma\v{c}\thanks{Faculty of Science, Department of Mathematics, University of Zagreb,
		Bijeni\v{c}ka 30, 10000 Zagreb, Croatia, drmac@math.hr} 
\and Arvind Krishna Saibaba\thanks{Department of Mathematics, North Carolina State University,
Raleigh, NC 27695-8205, USA, asaibab@ncsu.edu}
}

\begin{document}
	\maketitle
\begin{abstract}
New  contributions are offered to the theory and numerical implementation of the 
Discrete Empirical Interpolation Method (DEIM). {
A substantial tightening of the error bound for the DEIM oblique 
projection is achieved by index selection via a strong rank revealing 
QR factorization. This removes the exponential factor in the dimension of the 
search space from the DEIM projection error, and allows sharper a priori error bounds. Well-known canonical structure of pairs of projections is used to reveal canonical structure of DEIM.
Further, the DEIM approximation is formulated in weighted inner product 
defined by a real symmetric positive-definite matrix $W$. 
The weighted DEIM ($W$-DEIM) can be interpreted  as a numerical implementation of the  
Generalized Empirical Interpolation Method (GEIM)
and the more general Parametrized-Background Data-Weak (PBDW) approach. 
Also, it can be naturally deployed in the framework when the POD Galerkin projection is formulated in a 
discretization of a suitable energy (weighted) inner product such that the  
projection preserves important physical properties such 
as e.g. stability. While the theoretical foundations of weighted POD and the GEIM 
are available in the more general setting of function spaces, this paper focuses 
to the gap between sound functional analysis and the core numerical linear algebra. 
The new proposed algorithms allow different forms of $W$-DEIM for point-wise and generalized interpolation. For the generalized interpolation, our bounds show that the condition number of $W$ does not affect the accuracy, and for point-wise interpolation the condition number of the weight matrix $W$ enters the bound essentially  as 
$\sqrt{\min_{D=\mathrm{diag}}\kappa_2(DWD)}$, where $\kappa_2(W)=\|W\|_2 \|W^{-1}\|_2$ is the spectral condition number.  }
\end{abstract}

\begin{keywords}
	empirical interpolation, Galerkin projection, generalized empirical interpolation, nonlinear model reduction, oblique projection, proper orthogonal decomposition, parametrized-background data-weak approach, rank revealing QR factorization,  weighted inner product
\end{keywords}

\begin{AMS}
	15A12, 15A23, 65F35, 65L02, 65M20, 65M22, 93A15, 93B40, 93C15, 93C20
\end{AMS}	

\section{Introduction}
%
%

Suppose we want to run numerical simulations of a physical reality described by a set of ordinary differential equations (ODEs)
\begin{equation}\label{eq:LTI}
\dot x(t) = A x(t) + \f(x(t)),\;\;x(0)=x_0\in\R^m,
\end{equation}
where $A\in\R^{m\times m}$, $\f:\R^m\longrightarrow \R^m$. Often, of interest is $y(t)=C x(t)$, with some given $p\times m$ matrix $C$.
Such a system of ODEs can arise from discretization of a spatial differential operator in time dependent PDEs (e.g. method of lines), e.g. for the purposes of prediction and/or control, or optimization with respect to a set of parameters. In a parameter dependent case we have $A=A(\mu)$, $x=x(t;\mu)$, $x_0=x_0(\mu)$, and $\f(\cdot;\mu)$ is also parameter dependent, where the parameter $\mu$, that may carry e.g. information on material properties, is from a parameter domain $\mathcal{P}\subset \R^d$, $d\geq 1$.\footnote{To keep the notation simple, we suppress the explicit parameter dependence until numerical experiments in \S \ref{S=Examples}.} The function $\f(\cdot;\cdot)$ is in general assumed to be nonlinear.   
Large dimension $m$ (say, $m>10^5$) makes the task computationally intractable {for multiple query problems},  and one is forced to devise and use a reduced order system that emulates (\ref{eq:LTI}).

In a projection based model order reduction, one constructs a suitable low dimensional subspace $\mathcal{V}_k$ as the range of an $m\times k$ orthonormal matrix $V_k$ ($V_k^T V_k=\Id_k$) and seeks an approximation of the form $x(t) \approx \overline{x} + V_k \widehat{x}(t)$, $\widehat{x}\in\R^k$. The solution $x(t)$ is stored at a set of discrete times (also known as snapshots) and $\overline{x}$ is the average over the snapshots. The matrix $V_k$ can be, e.g., the POD basis of the $k$ leading left singular vectors of the centered snapshots $x(t_i)-\overline{x}$, computed at the discrete times $t_i$ from high resolution numerical simulations in the off-line phase; possibly over a parameter grid.  It is assumed that $k\ll m$.
By enforcing the orthogonality of the residual and the space $\mathcal{V}_k$, one obtains Galerkin projection of the original problem
\begin{equation}\label{eq:G1}
\dot{\widehat{x}}(t) = {V_k^T A V_k} \widehat{x}(t) + V_k^T A \overline{x}+ V_k^T \f(\overline{x}+V_k \widehat{x}(t)),\;\;\widehat{x}(0)= V_k^T (x(0)-\overline{x}), 
\end{equation}
where $A_k=V_k^T A V_k$  is $k\times k$, $V_k^T A \overline{x}\in\R^k$, but the projected nonlinear forcing term $V_k^T \f(\overline{x}+V_k \widehat{x}(t))$ still involves the dimension $m$,  in computing $\widetilde{x}(t)=\overline{x}+V_k \widehat{x}(t)$ and  $f=\f(\widetilde{x}(t))$ (at a sequence of discrete values $t=t_i$), as well as in computing $V_k^T f$. 
For large $m$, this carries substantial computational effort and heavy memory traffic. 

The Discrete Empirical Interpolation (DEIM) \cite{DEIM} method provides a way to alleviate these burdens, and to efficiently approximate $\f(\cdot)$ from a learned subspace. DEIM originates in the Empirical Interpolation Method (EIM) \cite{grepl-maday-2007}, \cite{EIM}, \cite{Maday2009383} and it uses the reduced basis provided by the POD. For a related discrete version of EIM see \cite{Haasdonk-Ohlberger-Rozza-2008}. {Here, for the reader's convenience, we first briefly review the main steps of DEIM approximation and its error estimate, and then we place it in the more general concepts of GEIM and PBDW.}

\subsection{DEIM}\label{s_deim}
Suppose we have empirically determined an $r$-dimensional subspace $\mathcal{U}_r$ as the range of  an orthonormal $U_r$ such that $U_r U_r^T \f(\overline{x}+V_k \widehat{x}(t)) \approx \f(\overline{x}+V_k \widehat{x}(t))$. This can be done e.g. by the POD, which will determine a suitable dimension $r$ from the decay of the singular values of the matrix of snapshots. 
The tacit assumption is that $\f(\cdot)$ is from a set of functions $\mathcal{F}$ with small Kolmogorov $r$-width \cite{Kolmogorov-1936}, \cite[Chapter 6]{kowalski-sikorski-stenger-1995}, \cite{Maday2009383}.
Inserting the orthogonal projection $U_r U_r^T$ into (\ref{eq:G1}) gives
\begin{equation}\label{eq:G2}
\dot{\widehat{x}}(t) = {V_k^T A V_k} \widehat{x}(t) + V_k^T A \overline{x}+ 
V_k^T U_r U_r^T\,\f\left(\overline{x}+V_k \widehat{x}(t)\right) +
V_k^T (\Id_m - U_r U_r^T)\,\f\left(\overline{x}+V_k \widehat{x}(t)\right),
\end{equation}
where $\Id_m\in\R^{m\times m}$ denotes the identity, and the last term (the POD error, as seen from $\mathcal{V}_k$) can be neglected.
However, this still does not solve the problem of computational complexity because it requires all $m$ components of $\f\left( \overline{x}+V_k \widehat{x}(t)\right)$, and  the matrix vector product
$U_r^T\f\left( \overline{x}+V_k \widehat{x}(t)\right)$ takes $\mathcal{O}(mr)$ operations for every time point $t=t_i$. 

The DEIM \cite{DEIM} trick is to select a submatrix of the $m\times m$ identity $\Id_m$, 
$$\WSO\equiv\begin{pmatrix} \Id_m(:,i_1)& \cdots &\Id_m(:,i_r)\end{pmatrix}\in\R^{m\times r},$$
and to replace the orthogonal projector $U_r U_r^T$ by
the oblique projector 
$$
\D \equiv U_r (\WSO^T U_r)^{-1}\WSO^T.
$$
{Note that $\D$ has an interpolating property at the $r$ selected coordinates, 
$\WSO^T\D f = \WSO^T f$.}
The alternative for (\ref{eq:G2}) is thus
\begin{equation}\label{eq:G3}
\dot{\widehat{x}}(t) \approx {V_k^T A V_k} \widehat{x}(t) + V_k^T A \overline{x}+
V_k^T \D\,\f\left( \overline{x}+V_k \widehat{x}(t)\right) 
\end{equation}
where in the matrix product $V_k^T\D$, the factor $V_k^T U_r (\WSO^T U_r)^{-1}$ can be pre-computed in the off-line phase.
Obviously, important is only the component of the error 
$(\Id_m - \D)\f(\overline{x}+V_k \widehat{x}(t))$ that lies in $\mathcal{V}_k$. 

The on-line computation $\WSO^T \f(\overline{x}+V_k \widehat{x}(t))$ at any particular $t$ involves only $r$ values $\f_{i_j}(\overline{x}+V_k \widehat{x}(t))$, $j=1,\ldots, r$.
If $\f$ is defined at a vector $x=(x_i)_{i=1}^m$ component-wise as\footnote{For a general nonlinear $\f(x)=( \varphi_1(x_{\mathcal{I}_1}), \varphi_2(x_{\mathcal{I}_2}), \ldots, \varphi_m(x_{\mathcal{I}_m}))^T$, where $x_{\mathcal{I}_j}$ ($\mathcal{I}_j\subseteq \{ 1, \ldots, m \}$) denotes a sub-array of $x$ needed to evaluate $\varphi_j(x)$, the situation is more complicated, see \cite[\S 3.5]{DEIM}.} $\f(x) = ( \phi_1(x_1), \phi_2(x_2), \ldots, \phi_m(x_m))^T$ then
$$
\WSO^T \f(\overline{x}+V_k \widehat{x}(t)) = \begin{pmatrix} \phi_{i_1}(\overline{x}_{i_1}+V_k(i_1,:) \widehat{x}(t)) \cr  
\phi_{i_2}(\overline{x}_{i_2}+V_k(i_2,:) \widehat{x}(t)) \cr \vdots \cr 
\phi_{i_r}(\overline{x}_{i_r}+V_k(i_r,:) \widehat{x}(t))\end{pmatrix} \equiv
\f_{\WSO}(\WSO^T \overline{x}+(\WSO^T V_k)\widehat{x}(t)) , \;\; t= t_1, t_2, \ldots
$$
and the computational complexity of 
$$
V_k^T\D \f(\overline{x}+V_k \widehat{x}(t)) = (V_k^T U_r)(\WSO^T U_r)^{-1} \f_{\WSO}(\WSO^T \overline{x}+(\WSO^T V_k)\widehat{x}(t))
$$
becomes independent of the dimension $m$, once the time independent matrices are precomputed in the off-line phase.\footnote{In the sequel, for the sake of simplicity, we do not include centering of the snapshots.} This tremendously reduces both the flop count and the memory traffic in the (on-line) simulation.


The error of the  DEIM oblique projection can be bounded in the Euclidean norm by that of the orthogonal projector,
\begin{eqnarray}\label{e_projcond}
\| f - \D f\|_2\leq \kappa\, \|(\Id_m - U_r U_r^T)f\|_2, \quad \text{where}\quad
\kappa\equiv \|(\WSO^T U_r)^{-1} \|_2.
\end{eqnarray}
The condition number $\kappa$ determines the quality of the approximation, and satisfies
$\kappa \leq \mathcal{O}\left(m^{(r-1)/2}\right) / \|U_r(:,1)\|_{\infty}$ \cite{DEIM}.
In practical situations, however, this bound is pessimistic  and $\kappa$ is much lower. (Using the concept of maximal volume \cite{Knuth-volume}, \cite{Goreinov19971}, it can be shown that there exists a strategy such that $\kappa\leq \sqrt{1+r(m-r)}$.)

\subsubsection{{Variations and generalizations}}
{
DEIM has been successfully deployed in many applications, and tuned for better performance, giving rise to 
 the localized DEIM \cite{peherstorfer2014localized}, unassembled DEIM (UDEIM) \cite{Tiso-Rixen-2013}, \cite{TISO:2013:UDEIM},  matrix DEIM \cite{Wirtz-Sorensen-Haasdonk=2014}, \cite{Negri:2015:MDEIM}, nonnegative DEIM (NNDEIM) \cite{NNDEIM-2016}, and
Q-DEIM \cite{drmac-gugercin-DEIM-2016}. The latter is an orthogonal variant of DEIM, which can be efficiently implemented with high-performance libraries such as LAPACK \cite{LAPACK} and ScaLAPACK \cite{ScaLAPACK}. Furthermore, Q-DEIM 
admits a better condition number bound,
$\kappa \leq \sqrt{m-r+1}\mathcal{O}(2^r)$; it allows randomized sampling; and it can work with only a subset of the rows of $U_r$ for computing selection matrices $\WSO$  while keeping $\kappa$ moderate. }
\subsection{GEIM and PBDW}\label{SS=GEIM-PBDW}
{ 
In many applications, the functions' values
may not be available through point evaluation because, e.g., they are from a class that does not contain continuous functions,  there is no analytical expression, or they may be noisy sensor data (measurements) obtained by weighted averaging.} In those cases, point-wise interpolation may not be possible, nor even desirable -- for a most illuminating discussion see \cite{GEIM}. 
{This motivated the development of a generalization of EIM, GEIM (Generalized Empirical Interpolation Method), which replaces point interpolation by more general evaluation functionals selected from a dictionary; see \cite[Chapters 4, 5]{Mula-Thesis} and \cite{maday:hal-00812913}, \cite{GEIM},
\cite{Maday-Mula-Turinici-GEIM-2016} 
}

{These ideas have been further extended in the Parametrized-Background Data-Weak approach to data assimilation (PBDW) \cite{NME:NME4747}. PBDW is an {elaborate data assimilation scheme} whose weak formulation naturally fits variational framework for (parametrized) PDEs, and facilitates error estimates (both \emph{a priori} and \emph{a posteriori}) with the capability to  identify optimal observation functionals.
Additional insights and analysis of PBDW with respect to noise in the data, and an updating strategy for many-query scenarios, are provided in \cite{PBDW-more}; the optimality of the approximation is established in \cite{Binev-data-assim-2017}. {Furthermore, \cite{Binev-data-assim-2017} contains a multi-space extension.}
}

{
In the context of empirical interpolation, PBDW allows more (generalized) approximation positions than the cardinality of the POD basis, thus calling for least squares approximation. In particular, it contains GEIM as a special (one-space) case. }

\subsection{Proper inner-product space structure}\label{SS=UPISPS}

 {In applications in engineering and applied sciences the solution of (\ref{eq:LTI}) represents an approximation of a function from an appropriate function space, that is subject to governing equations that describe a physical reality. 
 	The quality of an approximation is then naturally measured in an appropriate (interpretable) metric in that space.
 	
 For instance, in many applications the natural ambient space is (weighted) $L^2(\Omega)$,
 $
 L^2(\Omega) = \{ f:\Omega\longrightarrow \R \; :\; \int_\Omega|f(x)|^2 \rho(x) dx < \infty\},
 $
 with the Hilbert space structure generated by the inner product 
 $
 	(f,g)_{L^2(\Omega)} = \int_\Omega f(x) g(x) \rho(x) dx,
 $
 and with the corresponding induced norm $\|f\|_{L^2(\Omega)}=\sqrt{(f,f)_{L^2(\Omega)}}$. 
 Both the weight function $\rho(\cdot)$ and a quadrature formula in the course of constructing a discrete (finite $m$-dimensional) framework yield a weighted inner product in $\R^m$, $(u,v)_W = v^T W u$, where $W$ is the corresponding symmetric positive definite matrix. Then the natural framework for devising e.g. a POD approximation \cite[\S 1.2]{volkwein-2011-mor} is given by the Hilbert space structure of $(\cdot,\cdot)_W$. 
 Further, for the equations of e.g. compressible fluid flow, Galerkin projection in an $(\cdot,\cdot)_{L^2(\Omega)}$ inner product may not preserve the underlying physics, such as energy conservation or stability, see e.g. \cite{Rowley2004115}, \cite[\S 3.4.3]{holmes2014turbulence}. Different inner products (with corresponding norms) may yield substantially different results, see e.g. \cite{POD-Sound-Freund}, \cite{POD-Symm-TW}, \cite{Kalashnikova-Arun-2014}. In model order reduction, for instance, a Galerkin projection may be naturally defined in a Lyapunov inner product, generated by the positive definite solution $W$ of a Lyapunov matrix equation, see e.g. \cite[\S 6.1]{ROM-SANDIA-2014}, \cite{Serre20125176}, \cite{Rowley-MRF-2005}, \cite[\S 5.4.3]{holmes2014turbulence}. For further examples and in-depth discussion see \cite{Barone20091932} \cite{Kalashnikova2014569}, \cite{Calo2014204}, \cite{ZIMMERMANN2010165, zimmermann-sisc-2016}, \cite{Satish-et-all-Orr-Sommerfeld}, \cite{Noack-thermo-unsteady}, \cite{NME:NME4820}. {It should be clear that the use of a weighted inner product in the POD-DEIM framework does not guarantee the stability of the reduced system, unless the DEIM is additionally adapted to a particular structure. An excellent example of energy stable DEIM approximation is the NNDEIM \cite{NNDEIM-2016}.}

{The use of a proper inner product is implicitly assumed in the abstract framework of PBDW, including the special case of GEIM. The resulting numerical realization of the proper inner product results in the discrete  $(\cdot,\cdot)_W$ inner product.} 
	From the numerical point of view, this is not a mere change to another inner product, as the condition number of $W$ becomes an important factor both in the theoretical projection error bound and in the computation in finite precision arithmetic. 	
Hence, it seems natural and important to revise the numerical implementation of DEIM oblique projection, to place it in the wider context of PBDW, and to ensure its robustness independent of the possibly high condition number of the weight matrix $W$.}

\subsection{Scaling of variables}\label{SSS-scaling-of-variables}
We discuss difficulties due to scaling issues in the practical computation of a POD basis and 
construction of a DEIM projection, and argue that, when appropriate, 
the DEIM projection must be weighted in a consistent manner with the POD basis.

Scaling issues discussed here arise from two sources.
First, when unknowns $x_i(t)$ represent different physical quantities,
such as  velocity and pressure, and the numerical values of one of them, say pressure, can dominate all others by 
several orders of magnitude.
Second, a single variable can vary over a wide range. 
In both scenarios, the components of $\f(x(t))$ may vary over several orders of magnitude, so that
the matrix of nonlinear snapshots 
$$F\equiv\begin{pmatrix} \f(x(t_1)) & \cdots & \f(x(t_n))\end{pmatrix}\in\R^{m\times n}$$
has graded rows, with widely varying norms. 

Let us try to understand the computational ramifications. Suppose the rows of 
$F=\left(\begin{smallmatrix} B \cr s\end{smallmatrix}\right)$ are permuted so that
$B$ contains the rows with large norm, $s$ the rows with small norm, so that in the Frobenius norm $\|B\|_F \gg \|s\|_F$. 
Typically $m\gg n$, and let the thin SVD be $F = U \Sigma K^T$, where $U$ is $m\times n$ orthonormal, $\Sigma$ is diagonal and  $K$ is an orthogonal matrix. 
An economical way to compute $U_r$, often used in practice, is to first compute the eigenvalue
decomposition $G\equiv F^T F=K \Sigma^2 K^T$. Then choose a suitable $r$, 
compute $U_r = F K(:,1:r)\Sigma(1:r,1:r)^{-1}$, and apply a Gram-Schmidt correction to
improve the numerical orthonormality of $U_r$. 
Since $F^T F = B^T B + s^T s \approx B^T B$, in this procedure the contribution of $s$ to the 
computation of $K$ and $U_r$ is marginal  
and the subdominant variables are almost invisible.

Further, the POD basis may inherit the graded structure of $F$.  Assume, for the purpose of demonstration, 
that the dominant $r$ singular values
of $F$ are nearly equal and much larger than the remaining,  subdominant, singular values.
Rearranging the thin SVD $F = U \Sigma K^T$, where $K$ is an orthogonal matrix, shows 
that the row norms of $U\Sigma = F K$ are the same as those of $F$. Furthermore, 
the row norms of the matrix of the leading $r$ singular vectors $U_r$
are distributed like the corresponding row norms of $F$.  
 The indices corresponding to dominant variables have dominant rows in $U_r$, which creates
 difficulties for the representation of subdominant variables. 
 
 Moreover, the DEIM \cite{DEIM} and Q-DEIM \cite{drmac-gugercin-DEIM-2016} are based on
 greedy algorithms that try to identify an $r\times r$ submatrix of $U_r$ of maximal volume,
 thus preferring row indices corresponding to dominant variables  and ignoring the others. The 
 resulting small
 approximation error in the Euclidean norm is misleading, though. Without prior scaling,
 relevant and informative subdominant variables are unnecessarily suppressed.\footnote{Recall the discussion in \S \ref{SS=UPISPS}.} 
 
Finally, it should be pointed out that strongly graded $F$ poses intrinsic computational difficulties 
for any algorithm for computing $U_r$. Even the backward error $\delta F$ that corresponds to the numerical 
computation, and which is small in the sense that $\|\delta F\|_F/\|F\|_F$ is small, may wipe out the information 
on the subdominant variables. The corresponding entries of the left singular vectors $u_{k}$, $k=1,\ldots, r$, 
are computed  possibly with large relative error, as numerical methods in general compute the singular vectors 
with error such that $\|\delta u_k\|_2$ is appropriately bounded by the machine roundoff times a condition 
number \cite{Wedin1972}, \cite[V.4]{ste-sun-90}. Tiny components of $u_k$ are usually computed with large relative error.

\subsection{Contributions and overview of the paper}
{Our contributions to the theory and practice of empirical interpolation methods in the framework of PBDW approximations (in particular, EIM and GEIM) are towards numerical linear algebra and matrix theory; the goal is to setup a more general algorithmic schemes and principles for development of numerical methods with sharp error bounds, and for their successful software implementations and applications in scientific computing.} 

In Section~\ref{S:SRRQR}, we present a substantial improvement of the bound on the condition number $\kappa$ in (\ref{e_projcond}).
The selection operator $\WSO$ is based on local maximal volume approach \cite{Knuth-volume,Goreinov19971},
implemented via a strong rank-revealing QR decomposition \cite{GuE96}; the resulting DEIM condition number is $\kappa\leq \sqrt{1+\eta^2\, r(m-r)}$, with tunable parameter $\eta\geq 1$.
In \S \ref{S=Canonical}, we present a canonical form for the DEIM projector, 
which is based on the well known structure of oblique projections. It
provides better understanding of the structure of DEIM and its approximation error. 
In \S \ref{S=WDEIM} we introduce and give detailed analysis of the weighted DEIM ($W$-DEIM) which naturally applies in the 
situations discussed in \S \ref{SS=UPISPS}, \S \ref{SSS-scaling-of-variables}. The goal is to establish a universal framework for DEIM projections in weighted inner product spaces, 
where inner products are induced by positive definite matrices $W$ of various origins and with 
various interpretations. 

{We present several algorithms for computing the $W$-DEIM approximation. The algorithms come in two flavors depending on whether generalized or pointwise interpolation is desired. When generalized interpolation is to be used, we present different algorithms depending on whether $W$ is dense or sparse. When pointwise interpolation is used, our analysis shows that the condition number of $W$ plays a role in the error analysis. To mitigate this issue, we present an algorithm for which the spectral condition number $\kappa_2(W)=\|W\|_2 \|W^{-1}\|_2$ enters the error bound, up to the factor of $\sqrt{m}$, as $\sqrt{\min_{D=\mathrm{diag}}\kappa_2(DWD)}$.}

{$W$-DEIM can be considered as a numerical realization of disretized (one-space) PBDW that includes, as a special case, a discrete version 
of the generalized empirical interpolation method (GEIM).}
In \S \ref{S=Examples}, we corroborate 
the results with numerical examples.


\section{Nearly optimal subset selection}\label{S:SRRQR}
We review strong rank revealing QR methods for matrices with at least as many rows as columns
(\S \ref{s_tall}); and present an extension to matrices with fewer rows than columns
and apply it to matrices with orthonormal rows (\S \ref{s_wide}). This yields a new DEIM selection
with superior error bound.

\subsection{Tall and skinny matrices}\label{s_tall}
For $\mat{A}\in\R^{m \times n}$ with $m\geq n$,
and a target rank $r<n$, a QR factorization with column pivoting computes
\begin{eqnarray*}\label{eqn:rrqr}
\mat{A}\mat{\Pi} \quad =\mat{Q} \begin{pmatrix}
\mat{R}_{11} & \mat{R}_{12} \\  0 & \mat{R}_{22} \end{pmatrix},
\end{eqnarray*}
where $\mat{\Pi}\in\R^{n\times n}$ is a permutation;
$\mat{Q} \in \R^{m\times m}$ is an orthogonal matrix; 
$\mat{R}_{11} \in \R^{r\times r}$ and $\mat{R}_{22} \in \R^{(m-r) \times (n-r)}$ are   
upper triangular; and $\mat{R}_{12} \in \R^{r \times (n-r)}$. 

Let $\sigma_1(A)\geq \cdots \geq \sigma_n(A)\geq 0$ be the
singular values of $A$.
Singular value interlacing \cite[Corollary 8.6.2]{GovL13}
implies for the non-increasingly ordered singular values $\sigma_j(\mat{R}_{11})$ and $\sigma_j(\mat{R}_{22})$ of the diagonal blocks $\mat{R}_{11}$ and $\mat{R}_{22}$, respectively,
\begin{eqnarray*}
\sigma_j(\mat{R}_{11}) &\leq & \sigma_j(\mat{A}) ,\qquad \;\;\;1\leq j \leq r\\ 
\sigma_{r+j}(\mat{A}) &\leq  & \sigma_{j}(\mat{R}_{22}), \qquad 1\leq j\leq n-r.
\end{eqnarray*}

So-called \textit{rank-revealing QR (RRQR) factorizations} \cite{ChI91a,GuE96}
try to make the singular values of $\mat{R}_{11}$ 
as  large as possible, and those of $\mat{R}_{22}$ as small as possible.
In particular, the \textit{strong RRQR (sRRQR) factorization}
\cite[Algorithm 4]{GuE96} with tuning parameter $\eta \geq 1$ computes
a triangular matrix $R$ whose diagonal blocks have singular values 
within, essentially, a polynomial factor (in $n$ and $r$) of the singular values of $\mat{A}$,
\begin{eqnarray*}\label{eqn:rrqr_A}
\frac{\sigma_j(\mat{A})}{\sqrt{1 + \eta^2r(n-r)}} &\leq &\sigma_j(\mat{R}_{11}) ,\qquad 1\leq j \leq r\\ 
 \sigma_{j} (\mat{R}_{22})  & \leq &  \sqrt{1 + \eta^2r(n-r)} \,\sigma_{r+j}(\mat{A}), \qquad
 1\leq j\leq n-r ,\nonumber\\
\end{eqnarray*}
and whose off-diagonal block is bounded by
\begin{eqnarray*} \label{eqn:rrqr2}
\left| \left(\mat{R}_{11}^{-1} \mat{R}_{12} \right)_{ij}\right| \leq \eta, \qquad 
1\leq i\leq r, \, 1\leq j\leq n-r.
\end{eqnarray*}
For $\eta>1$  the sRRQR factorization can be computed in
$\mathcal{O}\left((m+n\log_{\eta}{n})n^2\right)$ arithmetic operations \cite[Section 4.4]{GuE96}.
Recommended values for $\eta$ are small fractional powers of $n$ \cite[Section 4.4]{GuE96}, such as 
$\eta=10\sqrt{n}$ \cite[Section 6]{GuE96}, which result in a $\mathcal{O}(mn^2)$ time complexity.

The traditional Businger-Golub QR with column pivoting \cite{bus-gol-65},
\cite[Algorithm 5.4.1]{GovL13}  often achieves the above 
bounds in practice, but fails spectacularly on contrived examples such as the Kahan matrix \cite{kahan-66}, \cite[Section 6]{GuE96}. Sometimes, the failure is caused by the software implementation of a RRQR factorization; for details see \cite{drmac-bujanovic-2008}.

\subsection{Short and fat matrices}\label{s_wide}
The sRRQR factorization can be adapted to matrices with fewer rows than columns, 
$m<n$, and of full row rank, to select a well-conditioned $m\times m$ submatrix.

For $\mat{A}\in\R^{m \times n}$ with $m\leq n$,
and target rank $r=m$, a QR factorization with column pivoting computes
\begin{eqnarray*}
\mat{A}\mat{\Pi}  =\mat{Q} \begin{pmatrix}\mat{R}_{11} & \mat{R}_{12} \end{pmatrix},
\end{eqnarray*}
where $\mat{\Pi}\in\R^{n\times n}$ is a permutation matrix,
$\mat{Q} \in \R^{m\times m}$ is an orthogonal matrix, 
$\mat{R}_{11} \in \R^{m\times m}$ is   
upper triangular, and $\mat{R}_{12} \in \R^{m \times (n-m)}$. 

A sRRQR factorization, in particular, is computed with a simplified version 
of \cite[Algorithm 4]{GuE96}. A column of $R_{11}$ is swapped with 
one in $R_{12}$ until $\left| \left(\mat{R}_{11}^{-1}\mat{R}_{12}\right)_{i,j}\right|\leq \eta$,
$1\leq i \leq m, 1 \leq j \leq n-m$. 
From \cite[Lemma 3.1]{broadbent2010subset} follows
\begin{eqnarray} \label{e_sigmaA}
\frac{\sigma_j(\mat{A})}{\sqrt{1 + \eta^2m(n-m)}} &\leq &\sigma_j(\mat{R}_{11}) ,\qquad 1\leq j \leq m.
\end{eqnarray}
Given a matrix $V$ with $r$ orthonormal columns, this algorithm can be used to select a well conditioned $r\times r$ submatrix.
 
\begin{lemma}\label{l_det}
Let $\mat{V} \in \R^{m\times r}$ with $\mat{V}^T\mat{V}=\mat{I}_r$.
Applying \cite[Algorithm 4]{GuE96} with target rank $r$ and tuning parameter $\eta\geq 1$
to $V^T$ gives a submatrix $\WSO\in\R^{m\times r}$ of $\Id_m$ with
$$\frac{1}{\sqrt{1 + \eta^2r(m-r)} }  \leq \sigma_j(\WSO^T\mat{V}) \leq 1, 
\qquad 1\leq j\leq r ,$$
and
$$ 1   \leq   \|(\WSO^T\mat{V})^{-1}\|_2 \leq   \sqrt{ 1+\eta^2 r(m-r)}.$$
\end{lemma}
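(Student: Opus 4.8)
The plan is to reduce the claim to the short-and-fat sRRQR bound \eqref{e_sigmaA} applied to $\mat{A}=\mat{V}^T\in\R^{r\times m}$ (so the ambient $n$ in \S\ref{s_wide} becomes $m$, and the target rank there is the full row rank $r$). First I would note that running \cite[Algorithm 4]{GuE96} on $\mat{V}^T$ produces a permutation $\mat{\Pi}\in\R^{m\times m}$ and a splitting $\mat{V}^T\mat{\Pi}=\mat{Q}(\mat{R}_{11}\;\;\mat{R}_{12})$ with $\mat{R}_{11}\in\R^{r\times r}$ invertible and $|(\mat{R}_{11}^{-1}\mat{R}_{12})_{ij}|\leq\eta$. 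The selection matrix $\WSO\in\R^{m\times r}$ is then the submatrix of $\Id_m$ picking out the first $r$ columns of $\mat{\Pi}$; concretely, $\mat{\Pi}=(\WSO\;\;\WSO_c)$ for the complementary selection $\WSO_c$, and $\mat{V}^T\WSO=\mat{Q}\mat{R}_{11}$. Hence $\WSO^T\mat{V}=(\mat{V}^T\WSO)^T=\mat{R}_{11}^T\mat{Q}^T$, so $\sigma_j(\WSO^T\mat{V})=\sigma_j(\mat{R}_{11})$ for all $j$.

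With that identification the lower bound on $\sigma_j(\WSO^T\mat{V})$ is exactly \eqref{e_sigmaA} with $\mat{A}=\mat{V}^T$ and $n=m$: since $\mat{V}$ has orthonormal columns, $\sigma_j(\mat{V}^T)=1$ for $1\leq j\leq r$, and \eqref{e_sigmaA} gives $\sigma_j(\mat{R}_{11})\geq 1/\sqrt{1+\eta^2 r(m-r)}$. For the upper bound $\sigma_j(\WSO^T\mat{V})\leq 1$, I would invoke singular value interlacing: $\WSO^T\mat{V}$ is an $r\times r$ submatrix of rows of $\mat{V}$, i.e. $\WSO^T\mat{V}=\WSO^T\mat{V}$ with $\WSO^T$ a coordinate projection, so $\|\WSO^T\mat{V}\|_2\leq\|\WSO^T\|_2\|\mat{V}\|_2=1$; equivalently this is the interlacing inequality $\sigma_j(\mat{R}_{11})\leq\sigma_j(\mat{V}^T)=1$ already recorded in \S\ref{s_tall}. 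Finally, $\|(\WSO^T\mat{V})^{-1}\|_2=1/\sigma_{\min}(\WSO^T\mat{V})=1/\sigma_r(\mat{R}_{11})$, and combining the two-sided bound on $\sigma_r(\mat{R}_{11})$ (namely $1/\sqrt{1+\eta^2 r(m-r)}\leq\sigma_r(\mat{R}_{11})\leq 1$) yields $1\leq\|(\WSO^T\mat{V})^{-1}\|_2\leq\sqrt{1+\eta^2 r(m-r)}$.

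The only genuine content beyond bookkeeping is the lower bound on $\sigma_r(\mat{R}_{11})$, and that is precisely \cite[Lemma 3.1]{broadbent2010subset} as quoted in \eqref{e_sigmaA}; so I would not reprove it. The step that most warrants care is the transposition/indexing bookkeeping: making sure that applying the algorithm ``to $\mat{V}^T$'' with ``target rank $r$'' matches the hypotheses of \S\ref{s_wide} (full row rank $r=m_{\text{there}}$, $n_{\text{there}}=m$), that the pivot columns selected in $\mat{V}^T$ correspond to the rows of $\mat{V}$ indexed by $\WSO$, and that $\sigma_j(\WSO^T\mat{V})=\sigma_j(\mat{V}^T\WSO)$ because the two matrices are transposes of one another. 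Once the dictionary $\mat{A}\leftrightarrow\mat{V}^T$, $n\leftrightarrow m$, $R_{11}\leftrightarrow\WSO^T\mat{V}$ (up to an orthogonal factor and transpose) is fixed, everything follows from \eqref{e_sigmaA} and elementary singular value inequalities.
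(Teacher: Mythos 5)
Your proposal is correct and follows essentially the same route as the paper: apply sRRQR to $V^T$, set $\WSO=\Pi_1$ so that $V^T\WSO=QR_{11}$ shares its singular values with $R_{11}$, invoke \eqref{e_sigmaA} together with $\sigma_j(V^T)=1$ for the lower bound, and read off the bound on $\|(\WSO^T V)^{-1}\|_2$ from $\sigma_r(R_{11})$. The dimension dictionary you spell out ($A\leftrightarrow V^T$, $n\leftrightarrow m$, target rank $r$) and the upper bound $\sigma_j(\WSO^T V)\leq 1$ via interlacing are exactly the bookkeeping the paper performs implicitly.
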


\begin{proof} 
Applying \cite[Algorithm 4]{GuE96} to $V^T$ gives
$$\mat{V}^T \begin{pmatrix}\mat{\Pi}_1 & \mat{\Pi}_2\end{pmatrix}  =
\mat{Q}\begin{pmatrix}\mat{R}_{11} & \mat{R}_{12} \end{pmatrix},$$
where $Q\in\R^{r\times r}$ is an orthogonal matrix; $R_{11}\in\R^{r\times r}$
is upper triangular; and $\begin{pmatrix} \Pi_1& \Pi_2\end{pmatrix}\in\R^{m\times m}$
is a permutation matrix with $\Pi_1\in\R^{m\times r}$.

Since $\mat{V}$ has $r$ orthonormal columns,  $\sigma_j(V)=1$, $1\leq j\leq r$.
From (\ref{e_sigmaA}) follows
$$\frac{1}{\sqrt{1 + \eta^2r(m-r)} }  \leq \sigma_j(R_{11}) \leq 1, \qquad 1\leq j\leq r.$$
Set $\WSO=\Pi_1$, so the first block column  equals
$\mat{V}^T\WSO =   \mat{V}^T\mat{\Pi}_1 = \mat{Q}\mat{R}_{11}$.
Since $\mat{Q}$ is an orthogonal matrix, $\mat{V}^T\WSO$ has the same
singular values as $\mat{R}_{11}$. 
\end{proof}

Lemma~\ref{l_det}, applied with $V=U_r$, implies a tremendous improvement for the error of the oblique projector
$\D$ in (\ref{e_projcond}). If $\WSO$ is computed from a sRRQR factorization of the transposed POD basis $U_r^T$, the condition number is bounded by
\begin{eqnarray}\label{e_projcond1}
\kappa\leq  \sqrt{ 1+\eta^2 r(m-r)}.
\end{eqnarray}

\section{Canonical representation of $\D$}\label{S=Canonical}
The DEIM operator is an oblique projection and, as such, it possesses certain canonical structure that is revealed in an appropriately chosen basis. In this section we derive representation of the DEIM projection operator in a particular basis, in order to gain better understanding of the effectiveness of DEIM. {As already mentioned in \S \ref{SS=GEIM-PBDW}, the PBDW framework \cite{NME:NME4747} allows selecting $s\geq r$ approximation points, and we will proceed with the general case of rectangular $\WSO^T U_r$.} {The oversampling has been successfully used in the related context of missing point estimation, see \cite{Astrid-etal-MPE-2008} \cite{Peherstorfer-Willcox-adeim}, \cite{Geom-subspace-Zimm-Per-Will-2015},  \cite{zimmerman-willcox-sisc-2016}.}


We adopt the following notation. Let $\WSO \in \mathbb{R}^{m\times s}$ be a selection of $s$ columns of the identity $\Id_m$ and let $U_r \in \mathbb{R}^{m\times r}$.  Define the orthogonal projectors $\Prj_{\WSO}=\WSO \WSO^T$ and $\Prj_{U_r}=U_r U_r^T$  
onto $\mathcal{R}(\WSO)$ and $\mathcal{R}(U_r)$, respectively. 

\subsection{Generalization of oblique DEIM}\label{SS=Gen-DEIM-3.1} 
We first derive a representation of the DEIM projection in terms of $\Prj_{\WSO}$ and $\Prj_{U_r}$. {Suppose $\WSO$ and $U_r$ have full column rank,
then  the DEIM projector $\D$ can be written as~\cite[Theorem 2.2.3]{Bjo15}}
\begin{equation}\label{eq:D1}
\D = U_r (\WSO^T U_r)^{-1}\WSO^T  = (\Prj_{\WSO} \Prj_{U_r})^{\dagger},
\end{equation}
where the superscript $\dagger$ denotes the Moore-Penrose inverse. Note that the expression $(\Prj_{\WSO} \Prj_{U_r})^{\dagger}$ does not require existence of the inverse $(\WSO^T U_r)^{-1}$; in fact it does not even require $\WSO$ and $U_r$ to have the same number of columns, or the same rank.

We now consider the case that $\WSO^TU_r \in \mathbb{R}^{s \times r}$ is a rectangular matrix where $s\neq r$. 
In this case, one can check (e.g., using the SVD of $\WSO^T U_r$) that it holds
\begin{equation}
(\Prj_{\WSO} \Prj_{U_r})^{\dagger} = U_r (\WSO^T U_r)^{\dagger}\WSO^T.
\end{equation}
This observation leads to a general definition of the  DEIM projection as $\D = (\Prj_{\WSO} \Prj_{U_r})^{\dagger}$ which is valid when $\WSO$ has different number of columns as $U_r$, and different rank. We now investigate whether this generalization retains the properties of interpolation ($\WSO^T \D f=\WSO^T f$) and projection ( $\D \Prj_{U_r} = \Prj_{U_r}$).

With the observation $\rank(\D) = \rank(\WSO^T U_r) = \min\{ s, r\}$, suppose that $s\neq r$ and  split the analysis into two cases. 
\smallskip

\begin{enumerate}
\item Case $\rank (\D) = s < r$  

\begin{enumerate}
\item The interpolation property still holds, i.e., 
$$
\WSO^T (\D f ) = \WSO^T U_r (\WSO^T U_r)^{\dagger}\WSO^T f = \WSO^T f .
$$ 
The reason for this is because $\WSO^T U_r$ has full row rank, and $(\WSO^T U_r)^{\dagger}$ is a right multiplicative inverse. 
\item On the other hand, the projection property is lost, i.e.,  $\D \Prj_{U_r} \neq \Prj_{U_r}$. However, $\D$ is still a projector, $\D^2=\D$. To find its range, let $W_s$ be the leading right $s$ singular vectors of $\WSO^T U_r$. Then the DEIM projection operator. $\D \Prj_{U_r} = \Prj_{V_s}$, where $V_s= U_r W_s$ spans an  $s$--dimensional subspace of $\mathcal{R}(U_r)$. Therefore, $\D$ is a projector onto $\mathcal{R}(V_s) \subset \mathcal{R}(U_r)$.

\end{enumerate}

\item Case $\rank(\D) = r < s $
\begin{enumerate}

\item The interpolation property does not hold, i.e., $\WSO^T (\D f ) \neq \WSO^T f$. This is because $(\WSO^T U_r)^{\dagger}$ is no longer a right multiplicative inverse. However,  $\WSO^T (\D f )$ is the least square projection of $\WSO^T f$ onto the range of $\WSO^T U_r$. To see this
$$
\WSO^T (\D f ) = \WSO^T U_r (\WSO^T U_r)^{\dagger}\WSO^T f =
\Prj_{\mathcal{X}} (\WSO^T f) ,\;\; \mathcal{X}=\mathcal{R}(\WSO^T U_r) .
$$ 

\item In this case $\D \Prj_{U_r} = \Prj_{U_r}$ since $(\WSO^T U_r)^{\dagger}$ is a left multiplicative inverse of $\WSO^T U_r$. 
 
\end{enumerate}
 \end{enumerate}
As can be seen above, when the DEIM operator is generalized to the setting $s \neq r$ only the projection property or the interpolation property is retained but not both simultaneously. For related developments, see~ \cite{NME:NME4747}, \cite{zimmerman-willcox-sisc-2016}, \cite{Casenave-EIM-variants-2016}.

\subsection{Canonical structure of $\D$}
We present the following theorem that sheds light onto the canonical structure of the DEIM operator $\D$. 
\begin{theorem}\label{TM-canonical-form}
{Let $U_r\in\R^{m\times r}$ and $\WSO\in\R^{m\times s}$ have orthonormal columns, and  $\D = U_r (\WSO^T U_r)^{\dagger}\WSO^T$ and assume that $1\leq r,s \leq m$. Let 
$\ell\equiv \mathrm{dim}(\mathcal{R}(\WSO) \bigcap \mathcal{R}(U_r))$,  set
$p\equiv \mathrm{rank}(\D) - \ell$, and 
	let  the singular values $\sigma_i=\cos\psi_i$ of $\WSO^T U_r$ be ordered as
	\begin{equation}
	1=\sigma_1=\ldots = \sigma_\ell > \sigma_{\ell+1}\geq \ldots \geq  \sigma_{\ell+p}>\sigma_{\ell+p+1} = \ldots = \sigma_{\min(r,s)}=0 .
	\end{equation}
	(Here 
	$0<\psi_{\ell+1}\leq\ldots \leq \psi_{\ell+p} <\pi/2$ are 
	the acute principal angles between the ranges of $\WSO$ and $U_r$.)}

	\emph{(i)}	There exists an orthogonal $m\times m$ matrix $Z$ such that the matrix $\D$ 
	can be represented as
	\begin{equation}\label{eq:D:canonical}
	\D =  (\Prj_{\WSO} \Prj_{U_r})^{\dagger} = Z \begin{pmatrix}
	\Id_\ell &   &    \cr
	& {\displaystyle \bigoplus_{i=1}^p T_i} &  \cr 
	&                          & \0
	\end{pmatrix} Z^T,\;\; T_i = \begin{pmatrix}
	1 & 0 \cr \tan\psi_{\ell+i} & 0 
	\end{pmatrix} .
	\end{equation}	
Here the $\0$ block is of size $m-\ell -2p$.
 
	\emph{(ii)} The DEIM projector $\D$ satisfies 
	{$\|\D\|_2 = 1/ \cos\psi_{\ell+p}$}. 
{If, in addition, $\D\neq \0$ and $\D\neq \Id_m$, then $\|\D\|_2 =  \|\Id_m-\D\|_2 =  1/ \cos\psi_{\ell+p} $.} 
\end{theorem}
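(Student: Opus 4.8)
The plan is to reduce everything to a canonical-angles (CS-type) decomposition of the pair of orthogonal projectors $\Prj_{\WSO}$ and $\Prj_{U_r}$, and then compute $\D=(\Prj_{\WSO}\Prj_{U_r})^\dagger$ block by block. First I would invoke the well-known simultaneous orthogonal decomposition of two subspaces (the Jordan principal-angle decomposition): there is an orthogonal $Z\in\R^{m\times m}$ that simultaneously block-diagonalizes $\Prj_{\WSO}$ and $\Prj_{U_r}$ into $1\times1$ and $2\times2$ blocks, corresponding respectively to (a) directions common to both ranges ($\ell$ of them), (b) directions in one range but orthogonal to the other, (c) the $p$ genuinely oblique pairs with principal angles $\psi_{\ell+i}\in(0,\pi/2)$, and (d) directions orthogonal to both ranges. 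In the oblique $2\times2$ blocks one can take $\Prj_{U_r}$ restricted to the block to be the projector onto $\mathrm{span}\{(1,0)^T\}$ and $\Prj_{\WSO}$ the projector onto $\mathrm{span}\{(\cos\psi,\sin\psi)^T\}$. Then $\Prj_{\WSO}\Prj_{U_r}$ is block-diagonal with blocks $\Id_1$ (common part), $0$ (everything else that is $1\times1$), and $\begin{pmatrix}\cos^2\psi & 0\\ \cos\psi\sin\psi & 0\end{pmatrix}$ on the oblique $2\times2$ blocks. Since the Moore–Penrose inverse commutes with orthogonal conjugation and with block-diagonal structure, I compute the pseudoinverse of each block separately; the $2\times2$ block has rank one and its pseudoinverse is readily checked to be $T_i=\begin{pmatrix}1&0\\ \tan\psi & 0\end{pmatrix}$ (verify by checking the four Penrose conditions, or by writing the rank-one block as $(\cos\psi)(\cos\psi,\sin\psi)^T e_1^T$ and using $(ab^T)^\dagger = (b^Ta)^{-1}\,(?)\ldots$ — more simply: the block is $u v^T$ with $u=(\cos^2\psi,\cos\psi\sin\psi)^T$, $v=e_1$, so its pseudoinverse is $\|u\|^{-2}\|v\|^{-2} v u^T = \cos^{-2}\psi\,(\cos^2\psi,\cos\psi\sin\psi) e_1$ placed appropriately, which rearranges to $T_i$). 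Counting dimensions gives the residual zero block of size $m-\ell-2p$, which proves (i); the correspondence $\sigma_i=\cos\psi_i$ between the singular values of $\WSO^T U_r$ and the cosines of the principal angles is the standard one, so the ordering hypothesis exactly matches the block structure.

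For (ii), the norm $\|\D\|_2$ is the largest singular value over all the blocks in \eqref{eq:D:canonical}. The $\Id_\ell$ block contributes $1$, the zero block contributes $0$, and each $T_i$ has $\|T_i\|_2^2$ equal to the larger eigenvalue of $T_i T_i^T=\begin{pmatrix}1 & \tan\psi\\ \tan\psi & \tan^2\psi\end{pmatrix}$, which is a rank-one matrix with nonzero eigenvalue $1+\tan^2\psi=1/\cos^2\psi$, so $\|T_i\|_2=1/\cos\psi_{\ell+i}$. Since the $\psi$'s are nondecreasing, the maximum over $i$ is $1/\cos\psi_{\ell+p}$, and this dominates $1$ (as $\psi_{\ell+p}>0$ whenever $p\ge1$; if $p=0$ then $\D$ is an orthogonal projector and $\|\D\|_2=1=1/\cos\psi_{\ell+p}$ under the convention $\psi_\ell=0$), giving $\|\D\|_2=1/\cos\psi_{\ell+p}$.

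For the equality $\|\D\|_2=\|\Id_m-\D\|_2$ when $\D\neq\0,\Id_m$: conjugating by $Z$, $\Id_m-\D$ is block-diagonal with blocks $\0_\ell$, $\Id - T_i=\begin{pmatrix}0&0\\ -\tan\psi & 1\end{pmatrix}$, and $\Id$ on the trailing zero block. The $\Id$ trailing block has norm $1$ provided $m-\ell-2p>0$, and each $\Id-T_i$ has $\|\Id-T_i\|_2^2$ equal to the nonzero eigenvalue of $(\Id-T_i)(\Id-T_i)^T=\begin{pmatrix}0&0\\0&1+\tan^2\psi\end{pmatrix}$, i.e.\ again $1/\cos^2\psi$. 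So $\|\Id_m-\D\|_2=\max(\text{(possibly) }1,\ 1/\cos\psi_{\ell+p})=1/\cos\psi_{\ell+p}=\|\D\|_2$. The one thing to be careful about — and I expect this to be the main subtlety rather than a deep obstacle — is the bookkeeping of the degenerate cases: exactly when is the trailing zero block present (i.e.\ $m>\ell+2p$), when is $p=0$, and ruling out $\D=\0$ (which forces $\rank\D=0$, impossible here unless $U_r$ or $\WSO$ is absent) and $\D=\Id_m$ (which would force $m=\ell$, i.e.\ $\mathcal{R}(\WSO)=\mathcal{R}(U_r)=\R^m$). Excluding those two boundary cases is precisely what guarantees that at least one block of norm exactly $1/\cos\psi_{\ell+p}$ survives in \emph{both} $\D$ and $\Id_m-\D$, which is why the hypothesis $\D\neq\0,\Id_m$ appears in the second sentence of (ii). I would close by remarking that this identity $\|P\|_2=\|\Id-P\|_2$ for a nontrivial projector is a classical fact (Del Pasqua / Ljance / Kato), here made transparent by the explicit canonical form.
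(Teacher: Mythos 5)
Your overall route is the same as the paper's: both rest on the Wedin/Jordan canonical decomposition of the pair of orthogonal projectors $\Prj_{\WSO}$, $\Prj_{U_r}$ into $1\times 1$ and $2\times 2$ blocks, followed by a blockwise Moore--Penrose inverse of the product. The one substantive difference is that you read the identity $\|\D\|_2=\|\Id_m-\D\|_2$ directly off the canonical form (with the correct bookkeeping of when the trailing identity block is present), whereas the paper simply cites Ipsen--Meyer and Szyld for it; your version is more self-contained on that point.

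There is, however, a concrete error in your computation of the $2\times 2$ blocks. With your stated convention --- $\Prj_{U_r}$ restricted to the block equal to $e_1e_1^T$ and $\Prj_{\WSO}$ equal to the projector onto $\mathrm{span}\{(\cos\psi,\sin\psi)^T\}$ --- the product block is indeed
$\left(\begin{smallmatrix}\cos^2\psi & 0\\ \cos\psi\sin\psi & 0\end{smallmatrix}\right)=uv^T$ with $u=(\cos^2\psi,\cos\psi\sin\psi)^T$ and $v=e_1$, but then the rank-one formula $(uv^T)^\dagger=\|u\|^{-2}\|v\|^{-2}\,vu^T$ gives
\begin{equation*}
\frac{1}{\cos^2\psi}\,e_1u^T=\begin{pmatrix}1 & \tan\psi\\ 0 & 0\end{pmatrix}=T_i^T,
\end{equation*}
not $T_i$; the phrase ``placed appropriately, which rearranges to $T_i$'' papers over a genuine transpose. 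To land on the form \eqref{eq:D:canonical} exactly as stated you must assign the roles the other way around, as the paper does: take $\Prj_{\WSO}$'s block to be $e_1e_1^T$ and $\Prj_{U_r}$'s block to be the projector onto $(\cos\psi,\sin\psi)^T$, whence the product block is $e_1\cdot\cos\psi\,(\cos\psi\ \ \sin\psi)$ and its pseudoinverse is $T_i$. This is a fixable convention slip rather than a conceptual gap --- part (ii) is unaffected since $\|T_i\|_2=\|T_i^T\|_2$ --- but as written your argument proves (i) with $T_i$ replaced by its transpose. (Also, the parenthetical trial formula $(ab^T)^\dagger=(b^Ta)^{-1}(\cdots)$ is not correct in general and should be deleted in favor of the rank-one formula you ultimately use.)
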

\begin{proof} The above representation follows immediately from the canonical representation of a pair of orthogonal projectors \cite{wed-82}. In a particularly constructed orthonormal basis given by the columns of $Z$, the two projectors have the following matrix representations:  
	\begin{eqnarray}
	\Prj_{\WSO} &=& Z \left(\begin{smallmatrix}
	\Id_\ell &   &    \cr
	& {\displaystyle \bigoplus_{i=1}^p J_i} &  \cr 
	&                          & D_{s}
	\end{smallmatrix}\right) Z^T,\;\; \mbox{where}\;\; J_i = \begin{pmatrix}
	1  \cr 0  \end{pmatrix} \begin{pmatrix}
	1 & 0 \end{pmatrix},\;\;\mbox{and}  \label{eq:PS}\\ 
	\Prj_{U_r} &=& Z \left(\begin{smallmatrix}
	\Id_\ell &   &    \cr
	& {\displaystyle \bigoplus_{i=1}^p \Psi_i} &  \cr 
	&                          & D_u
	\end{smallmatrix}\right) Z^T,\;\; \Psi_i = \begin{pmatrix}
	\cos\psi_{\ell +i} \cr \sin\psi_{\ell+i}\end{pmatrix} 
	\begin{pmatrix}
	\cos\psi_{\ell+i} & \sin\psi_{\ell+i} \end{pmatrix} , \label{eq:PU}
	\end{eqnarray}		
	with  $\psi_{\ell+i}$'s as stated in the theorem, and $D_s$, $D_{u}$ are diagonal matrices with diagonal entries $0$ or $1$ and such that $D_s D_u=\0$. 
	Note that each $(D_s)_{ii}=1$ ($(D_u)_{ii}=1$) corresponds to a direction in the range of $\WSO$ ($U_r$) orthogonal to the entire range of $U_r$ ($\WSO$).
	In the special case when $\WSO^TU_r$ is invertible,  $D_s=D_u=\0$.	

	The expression for $\D$ is obtained by multiplying the representations in (\ref{eq:PS}) and (\ref{eq:PU}), and taking the pseudoinverse. It follows that 
\[ (\mathcal{P}_\WSO\mathcal{P}_{U_r})^\dagger = Z \left(\begin{smallmatrix}
	\Id_\ell &   &    \cr
	& {\displaystyle \bigoplus_{i=1}^p (J_i\Psi_i)^\dagger} &  \cr 
	&                          & \0
	\end{smallmatrix}\right) Z^T. \]
A direct evaluation shows that 

\[ (J_i \Psi_i)^\dagger = \left[  \begin{pmatrix} 1 \cr 0  \end{pmatrix} \cos\psi_{\ell+i} \begin{pmatrix} \cos\psi_{\ell+i} & \sin\psi_{\ell+i}  \end{pmatrix}\right]^\dagger  = \begin{pmatrix}
	1 & 0 \cr \tan\psi_{\ell+i} & 0 
	\end{pmatrix} = T_i.\]

From the canonical representation~\eqref{eq:D:canonical} each block $T_i$ has the norm 
$$\|T_i \|_2 = {\sqrt{1+\tan^2\psi_{\ell+i}}}={1/\cos\psi_{\ell+i}}.$$ 
Therefore, it also follows that $\| \D \|_2 = 1/\cos\psi_{\ell+p}$. 
From (\ref{eq:D:canonical}) we can also derive the canonical form of $\Id_m-\D$: 
\[ \Id_m - \D = Z \left(\begin{smallmatrix}
	\0 &   &    \cr
	& {\displaystyle \bigoplus_{i=1}^p (\Id_2 - T_i)} &  \cr 
	&                          & \Id
	\end{smallmatrix}\right) Z^T.\]
The $\0$ block has dimensions $\ell$, whereas the identity block has dimensions ${m-\ell -2p}.$ When $\D \neq \0, \Id_m$, from~\cite[Corollary 5.2]{IpsM94}  and~\cite{Szyld2006} it follows that $\|\D\|_2 = \|\Id_m-\D\|_2$. 
\end{proof}

The novelty and the importance of Theorem \ref{TM-canonical-form} are in the interpretation in the DEIM setting, allowing for a  deeper understanding of the structure of the DEIM projection and its error. For related usage of canonical angles between subspaces \cite{bjo-gol-73}, see the construction of the favorable bases in \cite{Binev-data-assim-2017}.

\begin{remark}
	{\em 
Let $\WSO^TU_r$ be invertible and $\D = U_r (\WSO^TU_r)^{-1} \WSO^T$. If $f \in \mathcal{R}(U_r)$ then both the DEIM error and the orthogonal projection error are zero, as $\D f = \Prj_{U_r} f =f$. 
In the case  $f\neq \Prj_{U_r} f$, 
write $ f - \D f = (\Id_m - \Prj_{U_r})f + (\Prj_{U_r}-\D)f$; verify that  the summands are orthogonal, apply Pythagoras' theorem to get 
\[ \| f - \D f \|_2^2 = \|(\Id_m - \Prj_{U_r})f\|_2^2 + \|\D f - \Prj_{U_r} f\|_2^2. \]
Since $f \notin \mathcal{R}(U_r)$, we can factor out $\|(\Id_m - \Prj_{U_r})f\|^2$ to get
\begin{equation}\label{eq:kappa}
\| f - \D f \|_2 = \kappa' \| f - \Prj_{U_r} f\|_2 \qquad \kappa' \equiv \sqrt{1+ \frac{\| \D f - \Prj_{U_r} f\|_2^2}{\|f-\Prj_{U_r} f\|_2^2}} .
\end{equation}
(This is illustrated graphically in Figure~\ref{f_deim}.) 
Next, introduce the partition of $f$, represented in the basis $Z$, as follows:
$$
Z^T f=\left( \begin{smallmatrix} f_{[0]} \cr f_{[1]}\cr\vdots\cr f_{[p]} \cr f_{[p+1]} \end{smallmatrix}\right),\;\;f_{[0]}\in\mathbb{R}^\ell,\;\;f_{[1]},\ldots, f_{[p]}\in\mathbb{R}^2,\;\;f_{[p+1]}\in\mathbb{R}^{m-(\ell+2p)} .
$$
Now, straightforward computation for each $i=1,\ldots, p$ reveals that
\begin{eqnarray*}
	\| ( \Id_2 - \Psi_i) f_{[i]}\|_2 &=& \cos\psi_{\ell+i} \left\| \left( \begin{smallmatrix} 
		\frac{\sin^2\psi_{\ell+i}}{\cos\psi_{\ell+i}}& -\sin\psi_{\ell+i} \cr
		-\sin\psi_{\ell+i} & \cos\psi_{\ell+i}
	\end{smallmatrix}\right) f_{[i]}\right\|_2  \\
	\| ( T_i - \Psi_i) f_{[i]}\|_2 &=& \sin\psi_{\ell+i} \left\| \left( \begin{smallmatrix} \sin\psi_{\ell+i} & -\cos\psi_{\ell+i} \cr \frac{\sin^2\psi_{\ell+i}}{\cos\psi_{\ell+i}} & -\sin\psi_{\ell+i}\end{smallmatrix}\right) f_{[i]}\right\|_2  
	= \tan\psi_{\ell+i} \| ( \Id_2 - \Psi_i) f_{[i]}\|_2. 
\end{eqnarray*}
Together this gives 
\begin{align*}
\| \D f - \Prj_{U_r} f\|_2^2 = \| Z^T\D Z Z^T f - Z^T \Prj_{U_r} Z Z^T f \|_2^2  = & \>  \sum_{i=1}^p\tan^2 \psi_{\ell+i} \| ( \Id_2 - \Psi_i) f_{[i]}\|_2^2,\\
\end{align*}
Since $\WSO^TU_r$ is invertible, from the proof of Theorem~\ref{TM-canonical-form}, we have $D_u = \0$, and therefore 
\[ \|f-\Prj_{U_r} f\|_2^2 =\| Z^T f - Z^T \Prj_{U_r} Z Z^T f \|_2^2 =  \> \sum_{i=1}^p \| ( \Id_2 - \Psi_i) f_{[i]}\|_2^2 + \| f_{[p+1]}\|_2^2. \]
Since $f\notin \mathcal{R}(U_r)$, we can divide throughout by $\|f-\Prj_{U_r} f\|_2^2$ to obtain the inequality 
\[\sum_{i=1}^p \frac{\| ( \Id_2 - \Psi_i) f_{[i]}\|_2^2}{\|f-\Prj_{U_r} f\|_2^2} \leq 1. \]
Combining this inequality with the relation for $\| \D f - \Prj_{U_r} f\|_2^2$ into~\eqref{eq:kappa} gives
\begin{eqnarray*}
\frac{\|f-\D f\|_2^2}{\|f-\Prj_{U_r} f\|_2^2} = & \>  1 + \sum_{i=1}^p \tan^2 \psi_{\ell+i} \frac{\| ( \Id_2 - \Psi_i) f_{[i]}\|_2^2}{\|f-\Prj_{U_r} f\|_2^2} \\
\leq   & \>    1 + \tan^2\psi_{\ell + p} = \frac{1}{\cos^2\psi_{\ell+p}} .
\end{eqnarray*}
Therefore, $\kappa' \leq \| \D\|_2 = 1/\cos\psi_{\ell+p}$. 
This result, of course, reproduces the bound~\eqref{e_projcond}. However, the analysis shows that a tighter condition number $\kappa'$ can be obtained by considering how the contributions of the error are weighted in the principal directions identified in Theorem \ref{TM-canonical-form}. 
}
\end{remark}

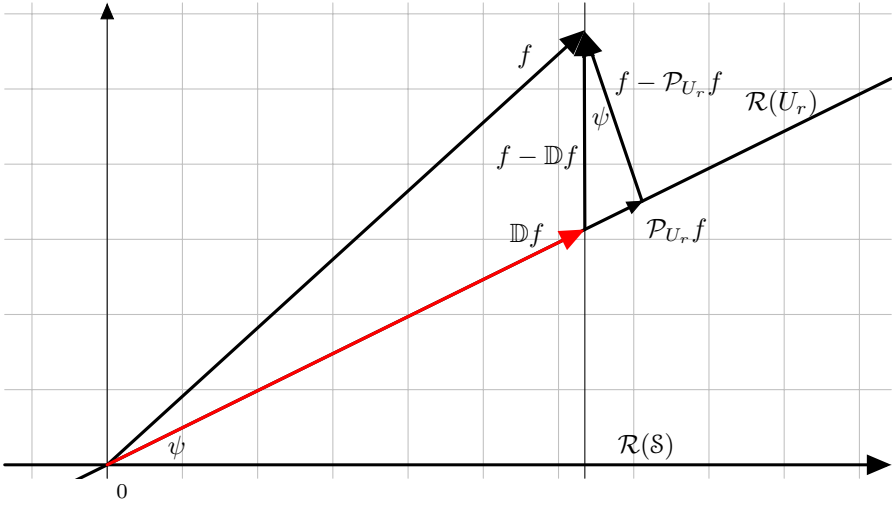
\begin{figure}
	\begin{center} $\quad$
\definecolor{ffqqqq}{rgb}{1.,0.,0.}
\definecolor{cqcqcq}{rgb}{0.7529411764705882,0.7529411764705882,0.7529411764705882}
\begin{tikzpicture}[line cap=round,line join=round,>=triangle 45,x=0.5cm,y=0.6cm]
\draw [color=cqcqcq,, xstep=1.0cm,ystep=1.0cm] (-2.7252425228757273,-0.2986983065545068) grid (20.847851205115493,10.243110234731438);
\draw[line width=0.4mm,->,color=black] (-2.7252425228757273,0.) -- (20.847851205115493,0.);
\draw[->,color=black] (0.,-0.2986983065545068) -- (0.,10.243110234731438);
\draw[color=black] (0pt,-10pt) node[right] {\footnotesize $0$};
\clip(-2.7252425228757273,-0.2986983065545068) rectangle (20.847851205115493,10.243110234731438);
\draw [line width=1.2pt,domain=-2.7252425228757273:20.847851205115493] plot(\x,{(-0.--10.1*\x)/24.58});
\draw [->,line width=1.2pt] (0.,0.) -- (12.7,9.64);
\draw (12.7,-0.2986983065545068) -- (12.7,10.243110234731438);
\draw [->] (0.,0.) -- (14.254377314544714,5.857168872127812);
\draw [->,line width=1.2pt,color=ffqqqq] (0.,0.) -- (12.7,5.21847030105777);
\draw [->,line width=1.2pt] (14.222584761506107,5.84410521119657) -- (12.680291891150095,9.625040459109206);
\draw (10.65492101045877,9.521239047724844) node[anchor=north west] {$f$};
\draw (16.726882497165928,8.504978146085737) node[anchor=north west] {$\mathcal{R}(U_r)$};
\draw (13.32636286403272,0.89828077055530764) node[anchor=north west] {$\mathcal{R}(\WSO)$};
\draw (10.45837249235593,5.5513790796014625) node[anchor=north west] {$\D f$};
\draw (14.09262749476093,5.662962013724084) node[anchor=north west] {$\Prj_{U_r}f$};
\draw (13.244223327426207,8.915697959382592) node[anchor=north west] {$f-\Prj_{U_r}f$};
\draw (10.087190455348585,7.280949157014725) node[anchor=north west] {$f-\D f$};
\draw (1.3669082220811837,0.84849655076174946) node[anchor=north west] {$\psi$};
\draw (12.636457493157596,8.152173003401993) node[anchor=north west] {$\psi$};
\draw [->,line width=1.2pt] (12.7,5.21847030105777) -- (12.683494938061466,9.617188180725055);
\end{tikzpicture}
	\end{center}
	\caption{(Cf. \cite[Figure 1]{GEIM}) DEIM interpolatory projection and its comparison with the corresponding orthogonal projection. Even in the general $m$-dimensional case, the nontrivial action of DEIM projection consists of $\mathrm{dim}(\mathcal{R}(\WSO) \bigcap \mathcal{R}(U_r))$--dimensional identity and $\mathrm{rank}(\D)-\mathrm{dim}(\mathcal{R}(\WSO) \bigcap \mathcal{R}(U_r))$ $2$--dimensional oblique (interpolatory) projections as shown in the figure.}
\label{f_deim}
\end{figure}


\subsection{Connection to CS decomposition}
The structure of $\D$ can also be analyzed using the Cosine--Sine (CS) decomposition~\cite{ste-82}. Assume for simplicity that the rows of $U_r$ are ordered so that $\WSO=\Id_m(:,1:r)$. If this is not the case, we work with $\Pi^T \D \Pi$, where $\Pi$ is a permutation matrix. Assume that $\WSO^TU_r$ is invertible and therefore, the DEIM operator is $\D = U_r (\WSO^T U_r)^{-1}\WSO^T$. Further, let $\WSO_\perp=\Id_m(:,r+1:m)$.

With these assumptions, $U_r$ has the CS decomposition
\[ U_r = \begin{pmatrix} \WSO^TU_r \\ \WSO_\perp^T U_r\end{pmatrix}  = \begin{pmatrix} \Omega_1 & \\ & \Omega_2 \end{pmatrix} \begin{pmatrix} \mathrm{Cos}\Psi \\ \mathrm{Sin}\Psi\end{pmatrix}\Gamma^T. \]
Here $\Omega_1, \Gamma\in \mathbb{R}^{r\times r}$ and $\Omega_2 \in \mathbb{R}^{(m-r)\times (m-r)}$ are orthogonal matrices and 
\[ \mathrm{Cos}\Psi =\mathrm{diag}(\cos\psi_i)_{i=1}^r \in \mathbb{R}^{r\times r}, \qquad \mathrm{Sin}\Psi =\mathrm{diag}(\sin\psi_i)_{i=1}^r \in \mathbb{R}^{(m-r)\times r}.\]
We can therefore represent $\D$ as 
\begin{displaymath}
 \D = \begin{pmatrix} \Omega_1\, \mathrm{Cos}\Psi\, \Gamma^T \cr \Omega_2 \,\mathrm{Sin}\Psi\, \Gamma^T \end{pmatrix} \Gamma\, (\mathrm{Cos} \Psi)^{-1}\, \Omega_1^T \begin{pmatrix} \Id _r & \0 \end{pmatrix} 
=  \begin{pmatrix} \Id_r & \0 \cr \mathrm{Tan}\Psi & \0 \end{pmatrix},
\end{displaymath}
where $\mathrm{Tan}\Psi =  \Omega_2 \mathrm{Sin}\Psi(\mathrm{Cos} \Psi)^{-1} \Omega_1^T = \Omega_2 \mathrm{diag}(\tan\psi_i)_{i=1}^r \Omega_1^T$. Similarly, we have
$$
\Id_m - \D = \begin{pmatrix} \0 & \0 \cr - \mathrm{Tan}\Psi & \Id_{m-r} \end{pmatrix},
$$
and we (again) see that 
$\|\D\|_2=\|\Id_m -\D\|_2=\sqrt{1+\|\mathrm{Tan}\Psi\|_2^2}.$
For further insights on the tangents between subspaces, see e.g., \cite{Angles-KnyazevZhu}.
	
\section{Weighted DEIM}\label{S=WDEIM}
{As discussed in \S\ref{SS=UPISPS}, 
	the discrete analogue of a (generalized) interpolatory projection based approximation must be constructed within an appropriate  weighted inner product, and the selection of the interpolation indices must ensure sharp error bounds. In particular, care must be taken to control how the condition number of the positive definite weight matrix $W$ influences the projection error, expressed in the $W$-weighted norm $\|u\|_W=\sqrt{u^TWu}$. In this section, we address this issue and propose two new algorithms for $W$-weighted variants of DEIM.}

{To set the scene and to introduce notation, in \S \ref{SS=Scene-WPOD} we recall the weighted POD.} In \S\ref{s_wdeim}, we propose $W$-DEIM oblique projection that relies on a more general form of the selection operator, and in the numerical realization uses $W$ implicitly through its Cholesky factor. In this case, although the pointwise interpolation is lost, the more general interpolation condition in the sense of GEIM holds true. In \S\ref{ss_pointwise} and \S\ref{ss_pointwise_scaling} we propose alternative methods for point selection in the weighted setting that allow for pointwise interpolation; however, the resulting approximation error bounds depend on the condition number of $W$ or on the condition number of optimally scaled $W$. 

\subsection{Setting the scene}\label{SS=Scene-WPOD}
Let $W\in\R^{m\times m}$ be symmetric positive definite, and define the weighted inner product 
for $u,v\in\R^m$ by 
$(u,v)_W \equiv  v^T W u.$
Let $W = LL^T$ be a factorization where the nonsingular matrix $L$ is
a Cholesky factor or the positive definite square root $L=W^{1/2}$. 
The original problem might give rise to a nonsingular matrix $L$, 
so that the weight matrix $W=LL^T$ is then given implicitly by its factor $L$. 
{Recall that any two square {``Cholesky''} factors of $W$ are related by an orthogonal matrix $Q$, so that
	$W^{1/2}=LQ$ \cite[page 67, {Exercise} (x)]{IIbook}.} 

\begin{remark}\label{RE:||W}
	{\em 
		In the weighted norm
		$\|u\|_W\equiv \sqrt{(u,u)_W}=\sqrt{u^TWu}=\|L^Tu\|_2,$
		the induced operator norm of an $M\in\R^{m\times m}$ equals
		$$
		\|M\|_W = \max_{x\neq 0}\frac{\|Mx\|_W}{\|x\|_W}= \max_{y\neq 0}\frac{\|L^T M L^{-T}y\|_2}{\|y\|_2}=\|L^{T}ML^{-T}\|_2.$$ 
		Further, in the $W$-inner product space, the adjoint of $M$ is 
		$M^{[T]}\equiv W^{-1} M^T W,$
		where $M^T$ is the transpose of $M$. 	
	}
\end{remark}


The POD basis with respect to $(\cdot,\cdot)_W$  is determined by the 3-step procedure in Algorithm \ref{zd:ALG:POD}. 
For more details see  \cite{volkwein-2011-mor}. For the sake of simplicity, we do not include centering of the snapshots matrix $Y$.
%
%
%
\begin{algorithm}[hbt]
\caption{$\WU=\mathrm{POD}(Y,W\equiv LL^T)$}
\label{zd:ALG:POD}
\begin{algorithmic}[1]
\REQUIRE  Symmetric positive definite $W\in\R^{m\times m}$, or
$L\in\R^{m\times m}$ such that $W=LL^T$ is positive definite.
Matrix $Y\in\R^{m\times n_s}$ of $n_s$ snapshots.
\STATE Compute the thin SVD  $L^T Y = U \Sigma V^T$.
\STATE Determine an appropriate index $1\leq r\leq \rank(L^TY)$ and select ${U}_r\equiv U(:,1:r)$.
\ENSURE $\WU \equiv L^{-T}{U}_r$.
\end{algorithmic}
\end{algorithm}

\noindent Algorithm~\ref{zd:ALG:POD} computes a matrix $\WU$ whose columns are  
$W$-orthonormal, {i.e.,} 
$\WU^T W \WU=\Id_r,$  
and the POD projection in the weighted inner product space is represented by 
\begin{equation}\label{eq:WPU}
\WP_{\WU} \equiv \WU \WU^T W = L^{-T}{U}_r U_r^T L^T.
\end{equation}
Note that $\WP_{\WU}^2=\WP_{\WU}$ and that
$\WP_{\WU}^{[T]}=\WP_{\WU}.$ In fact, $Y = \WU \Sigma V^T$ is a GSVD \cite{van1976generalizing} of $Y$.

\begin{remark}\label{R:<T>}
	{\em 
For $\R^{m\times r}\ni \WU: ( \R^r,(\cdot,\cdot)_2)\longrightarrow (\R^{m},(\cdot,\cdot)_W)$, 
the adjoint matrix in the two inner products is, by definition, given as $\WU^{<T>}=\WU^T W$. 
Hence, $\WU^{<T>}\WU=\Id_r$ and we can write the $W$-orthogonal projector (\ref{eq:WPU}) conveniently in the usual form as 
$\WP_{\WU} = \WU\WU^{<T>}$.	Recalling the discussion from \S \ref{SS=UPISPS}, the projected problem (\ref{eq:G1}) is then computed in the sense of $(\cdot,\cdot)_W$.
}
\end{remark}



\subsection{$W$-DEIM}\label{s_wdeim}
Once a discrete inner product $(\cdot,\cdot)_W$ has been chosen
to capture the geometric framework (e.g., for Petrov-Galerkin projection, POD), 
one needs to define an appropriate DEIM projection operator in this weighted setting. 
Furthermore, the resulting quantities are now measured in the weighted norm 
$\|x\|_W$. To that end, using the notation introduced in Remark \ref{R:<T>},    
we define a $W$-DEIM projector as follows.
\begin{definition}\label{zd:eq:DEF:WDEIM}
Let   $\WU\in\R^{m\times r}$ be $W$-orthogonal. With a full column rank generalized selection operator $\SO\in\R^{m\times s}$ (where $s \geq r$), define
a weighted $W$-DEIM projector 
\begin{equation}
\D \equiv \WU (\SO^{<T>}\WU)^{\dagger}\SO^{<T>} = \WU (\SO^T W \WU)^{\dagger}\SO^T W.
\end{equation}
\end{definition}
In the above definition, in addition to {the use of a} more general inner product, we also allow {for} tall rectangular $\SO^T W\WU {\in \mathbb{C}^{s\times r}}$. The only constraint is that $\SO^T W \WU$ has full column rank. {However, in practice, we will use the  square nonsingular case}.

For the moment, we leave the (generalized) selection operator $\SO$ unspecified, and we remark that the columns of $\SO$ need not be the columns of the identity matrix. \footnote{In fact, one can also allow full row rank to obtain a further variation of the DEIM projection as discussed in \S \ref{SS=Gen-DEIM-3.1}, but  we omit this for the sake of brevity.}
As in the case of DEIM, the matrix $\D$ is an oblique projector, {i.e., it satisfies}
$\D^2=\D.$ 

{The following proposition is a recast of \cite[Proposition 2.1]{zimmerman-willcox-sisc-2016} to the $\|\cdot\|_W$ norm.}
\begin{proposition}
Let $\D$ be {as} in Definition~\ref{zd:eq:DEF:WDEIM} and let $\SO^T W\WU$ have full column rank. Then 
\begin{equation}\label{e_wdeim_inter}
\| f - \D f \|_W \leq  \|\D\|_W \| f - \WP_{\WU} f\|_W .
\end{equation} 
\end{proposition}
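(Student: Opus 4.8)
The plan is to mimic the classical DEIM error argument, but carried out entirely in the $W$-inner product, exploiting that $\D$ is a $W$-oblique projector onto $\mathcal{R}(\WU)$ and that $\WP_{\WU}$ is the $W$-orthogonal projector onto the same subspace. First I would record the two structural facts: $\D^2 = \D$ (stated in the text) and $\D \WP_{\WU} = \WP_{\WU}$, i.e.\ $\D$ acts as the identity on $\mathcal{R}(\WU)$. The latter follows because $\WP_{\WU} = \WU\WU^{<T>}$ maps into $\mathcal{R}(\WU)$, and for any $v$, $\D(\WU v) = \WU(\SO^{<T>}\WU)^{\dagger}\SO^{<T>}\WU v = \WU v$ since $(\SO^{<T>}\WU)^{\dagger}$ is a left inverse of the full-column-rank matrix $\SO^{<T>}\WU$. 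Equivalently $\D = \D\WP_{\WU}$, which is the identity I actually need.

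Next I would write, for arbitrary $f$,
\begin{equation*}
f - \D f = (f - \WP_{\WU} f) + (\WP_{\WU} f - \D f) = (f - \WP_{\WU} f) - \D(f - \WP_{\WU} f),
\end{equation*}
where the last equality uses $\D\WP_{\WU} = \WP_{\WU}$, hence $\WP_{\WU} f - \D f = \D\WP_{\WU} f - \D f = -\D(f - \WP_{\WU} f)$. Therefore $f - \D f = (\Id_m - \D)(f - \WP_{\WU} f)$, and taking $\|\cdot\|_W$ gives
\begin{equation*}
\| f - \D f\|_W \le \|\Id_m - \D\|_W \, \| f - \WP_{\WU} f\|_W.
\end{equation*}
To finish I would invoke the $W$-analogue of the Ipsen--Meyer/Szyld identity $\|\D\|_W = \|\Id_m - \D\|_W$ for a nontrivial projector $\D$. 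In the $W$-inner product this reduces to the Euclidean statement via the unitary change of variables $x \mapsto L^T x$ of Remark~\ref{RE:||W}: writing $\widetilde{\D} = L^T \D L^{-T}$, one has $\widetilde{\D}^2 = \widetilde{\D}$, $\|\D\|_W = \|\widetilde{\D}\|_2$, $\|\Id_m - \D\|_W = \|\Id_m - \widetilde{\D}\|_2$, and the classical result applied to $\widetilde{\D}$ yields the equality (in the degenerate cases $\D = \0$ or $\D = \Id_m$ one checks the bound directly). Combining gives $\| f - \D f\|_W \le \|\D\|_W \|f - \WP_{\WU} f\|_W$, which is \eqref{e_wdeim_inter}.

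The only genuine obstacle is the identity $\|\D\|_W = \|\Id_m - \D\|_W$; everything else is routine projector algebra. One should be slightly careful that this equality needs $\D \ne \0, \Id_m$, but when $\D$ is one of these trivial projectors the inequality \eqref{e_wdeim_inter} holds trivially (for $\D = \Id_m$ the left side is zero; for $\D = \0$ one has $f - \D f = f$ and $\|\D\|_W = 0$, so the stated inequality would fail as an equality but the proposition's hypothesis $s \ge r \ge 1$ with $\SO^T W\WU$ of full column rank forces $\D \ne \0$, so this case does not arise). I would state this caveat in one line rather than belabor it. A cleaner alternative that sidesteps the norm-equality entirely: observe that $(\Id_m - \D)$ restricted to its action satisfies $(\Id_m - \D)(f - \WP_{\WU} f) = (\Id_m - \D)f$ only up to the range of $\WP_{\WU}$, so one may instead bound $\|\Id_m - \D\|_W$ directly by the canonical-form computation of Theorem~\ref{TM-canonical-form} (applied after the $L^T$-conjugation), but the short route above is preferable for a proposition of this kind.
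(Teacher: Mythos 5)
Your proof is correct and follows essentially the same route as the paper's: use the full column rank of $\SO^T W\WU$ to get $\D\WP_{\WU}=\WP_{\WU}$, deduce $f-\D f=(\Id_m-\D)(f-\WP_{\WU}f)$, and conclude via the projector-norm identity $\|\D\|_W=\|\Id_m-\D\|_W$ for nontrivial $\D$. The extra care you take in reducing that identity to the Euclidean case by the $L^T$-conjugation, and in dispatching the degenerate cases, is sound but not a different argument.
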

\begin{proof} Since  $\SO^T W \WU$ has full column rank, $(\SO^T W \WU)^{\dagger}$ is a left inverse, so that
$\D \WP_{\WU} = \WP_{\WU},$
hence $(\Id_m - \D)\WP_U = 0$. Consequently for any vector $f\in\R^m$
\begin{equation}
(\Id_m - \D) f = (\Id_m-\D)(\Id_m-\WP_{\WU})f.
\end{equation}
Since $\D$ is  non-trivial projector ($\D\neq\0$, $\D\neq\Id_m$) it holds that $\|\D\|_W=\|\Id_m-\D\|_W$, and~\eqref{e_wdeim_inter} follows.
\end{proof}

The condition number that amplifies the POD projection error 
$\| f - \WP_{\WU} f\|_W$ is the weighted norm $\|\D\|_W$. A naive application of the result in Remark~\ref{RE:||W} suggests the bound $\| \D\|_W \leq \sqrt{\kappa(W)} \| \D\|_2$. That is, the condition number of the inner product matrix $W$ could potentially amplify the $W$-DEIM projection error. However, by a clever choice of $\SO$ we can eliminate the factor $\sqrt{\kappa(W)}$.

\begin{definition}\label{d_wso}
{Let the weighted selection operator $\SO$ and the corresponding $W$-DEIM projector $\D$, respectively, be defined as 
\begin{equation}\label{zd:eq:SL}
\SO^T = \WSO^T L^{-1},\;\;\D \equiv \WU (\WSO^T U_r)^{\dagger}\WSO^T L^T
= L^{-T} U_r (\WSO^T U_r)^{\dagger}\WSO^T L^T,
\end{equation}
where $\WSO$ is an $m\times s$ index selection operator ($s$ selected columns of the identity $\Id_m$, $s\geq r$). }
\end{definition}

Note that while $\SO$ is possibly dense, $\WSO$ is a sparse matrix. We now present a result that quantifies the condition number $\|\D\|_W$ for the specific choice of selection operator $\SO$.
\begin{proposition}\label{zd:PROP:SL}
Let $\SO$ and $\D$ be defined as in~\eqref{zd:eq:SL}. Then $\SO^T W \SO=\Id_k$ and $\|D\|_W = \|(\WSO^T U_r)^{\dagger}\|_2$. 
\end{proposition}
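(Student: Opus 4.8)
The plan is to verify the two claims in Proposition~\ref{zd:PROP:SL} directly from the definitions, exploiting the factorization $W = LL^T$ and the $W$-orthonormality $\WU = L^{-T}U_r$ with $U_r^T U_r = \Id_r$.

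First I would establish $\SO^T W \SO = \Id_s$ (the statement writes $\Id_k$, presumably a typo for $\Id_s$). Starting from $\SO^T = \WSO^T L^{-1}$, we have $\SO = L^{-T}\WSO$, so $\SO^T W \SO = \WSO^T L^{-1}(LL^T)L^{-T}\WSO = \WSO^T \WSO = \Id_s$, since $\WSO$ consists of $s$ distinct columns of $\Id_m$ and hence has orthonormal columns. This shows the columns of $\SO$ are $W$-orthonormal, i.e.\ $\SO$ plays the role of a weighted analogue of a selection matrix.

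Next I would compute $\|\D\|_W$ using Remark~\ref{RE:||W}, which gives $\|\D\|_W = \|L^T \D L^{-T}\|_2$. Substituting the expression $\D = L^{-T} U_r (\WSO^T U_r)^{\dagger}\WSO^T L^T$ from~\eqref{zd:eq:SL}, the factors $L^T$ and $L^{-T}$ cancel at both ends: $L^T \D L^{-T} = U_r (\WSO^T U_r)^{\dagger}\WSO^T$. This is exactly the (ordinary, unweighted) DEIM-type operator associated with $U_r$ and $\WSO$, whose canonical structure was analyzed in Theorem~\ref{TM-canonical-form}. Its spectral norm is $\|U_r (\WSO^T U_r)^{\dagger}\WSO^T\|_2$. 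To reduce this to $\|(\WSO^T U_r)^{\dagger}\|_2$, I would argue that $U_r$ has orthonormal columns so left-multiplication by $U_r$ is a partial isometry and preserves the $2$-norm of $(\WSO^T U_r)^{\dagger}\WSO^T$; and right-multiplication by $\WSO^T$ (a partial isometry on the relevant subspace) together with the fact that $(\WSO^T U_r)^\dagger$ has its row space inside $\mathcal{R}((\WSO^T U_r)^T) = \mathcal{R}(\WSO^T U_r)^T$ ensures no norm is lost or gained. Concretely, using the SVD $\WSO^T U_r = \Omega_1 \,\Sigma\, \Gamma^T$, one checks $\|U_r (\WSO^T U_r)^{\dagger}\WSO^T\|_2 = \|\Sigma^{\dagger}\|_2 = \|(\WSO^T U_r)^{\dagger}\|_2$, since $U_r\Gamma$ and $\WSO\Omega_1$ have orthonormal columns. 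Hence $\|\D\|_W = \|(\WSO^T U_r)^{\dagger}\|_2$, which is precisely the condition number $\kappa$ appearing in~\eqref{e_projcond}, now free of any dependence on $\kappa_2(W)$.

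The main obstacle, though it is a mild one, is the norm-preservation step for the generalized inverse: one must be careful that $(\WSO^T U_r)^\dagger$ is not merely multiplied by isometries but that those isometries act on the correct subspaces (column space and row space of the pseudoinverse) so that $\|U_r(\WSO^T U_r)^\dagger \WSO^T\|_2 = \|(\WSO^T U_r)^\dagger\|_2$ holds with equality rather than just $\leq$. Invoking the SVD of $\WSO^T U_r$ makes this transparent and also connects cleanly with the CS-decomposition picture already developed, so I would present the argument in that form.
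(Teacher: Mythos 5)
Your proof is correct and follows essentially the same route as the paper: apply Remark~\ref{RE:||W} so that the factors of $L$ cancel, reducing $\|\D\|_W$ to $\|U_r(\WSO^T U_r)^{\dagger}\WSO^T\|_2=\|(\WSO^T U_r)^{\dagger}\|_2$. You in fact supply two details the paper's proof leaves implicit, namely the verification $\SO^T W\SO=\Id_s$ and the SVD argument showing that pre- and post-multiplication by the orthonormal-column matrices $U_r$ and $\WSO^T$ preserves the spectral norm of the pseudoinverse.
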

\begin{proof} Recall that $L^T\WU = U_r$ and by (\ref{zd:eq:SL}), $\SO^T L=\WSO^T$. Following Remark \ref{RE:||W}, 
straightforward computation yields 
\begin{align} \nonumber
\| \D \|_W = & \>  \| L^T (L^{-T}U_r)(\SO^T LL^T (L^{-T}U_r))^{\dagger}\SO^T LL^T L^{-T}\|_2 \\ 
= & \> \| U_r (\SO^T L U_r)^{\dagger} \SO^T L\|_2 ,
\end{align}	
where, by (\ref{zd:eq:SL}), $\SO^T L=\WSO^T$, and thus $\| \D \|_W =\| U_r (\WSO^T  U_r)^{\dagger} \WSO^T\|_2 = \| (\WSO^T  U_r)^{\dagger}\|_2$.
\end{proof}

Therefore, with this choice of $\SO$, the condition number of $\|W\|_2$ does not explicitly appear in the bounds.  However, the dependence on $W$ is implicitly contained in the matrix $U_r$ of the left singular vectors, and in the definition of $\WU$. 


{In \S\ref{ss_pointwise} we present alternative choices for the Selection Operator $\SO$ which can ensure pointwise interpolation.}

\begin{remark}
{\em 
To obtain the canonical structure of $W$-DEIM, one follows the derivation from \S \ref{S=Canonical}, properly adapted to the structure induced by $(\cdot,\cdot)_W$.
}
\end{remark}
 
\subsection{How to choose $\WSO$}
Recall that $\WSO$ contains carefully chosen columns of $\Id_m$. The index selection to determine the columns of $\WSO$ can be computed using the original DEIM approach \cite{DEIM}. Another approach, Q-DEIM proposed in \cite{drmac-gugercin-DEIM-2016}, uses a rank revealing QR factorization~\cite{bus-gol-65}, {implemented} in high performance software libraries such as LAPACK \cite{LAPACK} and ScaLAPACK \cite{ScaLAPACK}.  

However, in this paper, we adopt the {strong Rank Revealing QR} (sRRQR) factorization \cite[Algorithm 4]{GuE96}. We present a result that characterizes the error of $W$-DEIM
\begin{theorem}\label{t_dgeim_rrqr}
Applying sRRQR~\cite[Algorithm 4]{GuE96} to $U_r$ produces
an index selection operator $\WSO$ whose {\rm $W$-DEIM} projection error satisfies
	\begin{equation}
	\| f - \D f\|_W \leq \sqrt{1+\eta^2 r (m-r)} \| f - \WP_{\WU} f\|_W .
	\end{equation}
\end{theorem}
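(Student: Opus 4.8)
The plan is to combine the $\|\cdot\|_W$-projection inequality already established in \eqref{e_wdeim_inter} with the identity for $\|\D\|_W$ from Proposition~\ref{zd:PROP:SL}, and then invoke the bound on $\|(\WSO^T U_r)^\dagger\|_2$ obtained from Lemma~\ref{l_det}. Concretely, the selection operator $\SO$ here is the one from Definition~\ref{d_wso}, i.e.\ $\SO^T=\WSO^T L^{-1}$, where $\WSO$ is the index selection produced by running sRRQR \cite[Algorithm 4]{GuE96} on the matrix $U_r$ (equivalently on $U_r^T$, as in Lemma~\ref{l_det}). So the $W$-DEIM projector is $\D = L^{-T}U_r(\WSO^T U_r)^\dagger \WSO^T L^T$.

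First I would note that Proposition~\ref{zd:PROP:SL} gives $\|\D\|_W = \|(\WSO^T U_r)^\dagger\|_2$ exactly, with no dependence on $\kappa(W)$. Next, since $U_r$ has $W$-orthonormal columns in the sense that $\WU^T W \WU = \Id_r$ with $\WU = L^{-T}U_r$, we have $U_r^T U_r = \WU^T L L^T \WU = \WU^T W \WU = \Id_r$; that is, $U_r$ genuinely has orthonormal columns in the Euclidean sense (indeed $U_r$ is the matrix of left singular vectors of $L^T Y$ from Algorithm~\ref{zd:ALG:POD}). Therefore Lemma~\ref{l_det} applies verbatim with $V = U_r$: running sRRQR with target rank $r$ and tuning parameter $\eta\geq 1$ yields $\WSO$ with $\|(\WSO^T U_r)^{-1}\|_2 \leq \sqrt{1+\eta^2 r(m-r)}$ (in the square case $s=r$; more generally $\|(\WSO^T U_r)^\dagger\|_2$ obeys the same bound since the smallest singular value of $\WSO^T U_r$ is bounded below by $1/\sqrt{1+\eta^2 r(m-r)}$ via \eqref{e_sigmaA}).

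Then I would simply chain the estimates:
\begin{equation*}
\| f - \D f\|_W \;\leq\; \|\D\|_W\, \| f - \WP_{\WU} f\|_W \;=\; \|(\WSO^T U_r)^\dagger\|_2\, \| f - \WP_{\WU} f\|_W \;\leq\; \sqrt{1+\eta^2 r(m-r)}\; \| f - \WP_{\WU} f\|_W,
\end{equation*}
where the first inequality is \eqref{e_wdeim_inter}, the equality is Proposition~\ref{zd:PROP:SL}, and the last inequality is Lemma~\ref{l_det} applied to $U_r$. This is exactly the claimed bound.

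I do not anticipate a serious obstacle, since every ingredient is already in place; the statement is essentially a corollary assembling Proposition~\ref{zd:PROP:SL} and Lemma~\ref{l_det}. The one point deserving care is making sure the hypotheses of \eqref{e_wdeim_inter} hold — in particular that $\SO^T W\WU = \WSO^T U_r$ has full column rank (guaranteed by Lemma~\ref{l_det}, which forces its singular values to be at least $1/\sqrt{1+\eta^2 r(m-r)} > 0$) and that $\D$ is a nontrivial projector so that $\|\D\|_W = \|\Id_m - \D\|_W$; the degenerate cases $\D = \0$ or $\D = \Id_m$ are trivial or excluded by $1\le r < m$. If $s>r$ one should also observe, as in \S\ref{SS=Gen-DEIM-3.1}, that the full-column-rank case is the relevant one and the pseudoinverse is a genuine left inverse, so $\D\WP_{\WU} = \WP_{\WU}$ still holds and the proof of the Proposition goes through unchanged.
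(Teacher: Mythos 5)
Your proposal is correct and follows essentially the same route as the paper's own proof: combine the projection inequality \eqref{e_wdeim_inter} with Proposition~\ref{zd:PROP:SL} to reduce the problem to bounding $\|(\WSO^T U_r)^{\dagger}\|_2$, then invoke Lemma~\ref{l_det} (sRRQR applied to $U_r^T$) for the factor $\sqrt{1+\eta^2 r(m-r)}$. Your extra checks --- that $U_r$ is Euclidean-orthonormal and that $\WSO^T U_r$ has full column rank so the hypotheses of \eqref{e_wdeim_inter} hold --- are sound and slightly more careful than the paper's one-line argument.
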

\begin{proof}
Combining~\eqref{e_wdeim_inter} and Proposition~\ref{zd:PROP:SL} gives  
\[ \| f - \D f\|_W \leq \| (\WSO^TU_r)^{\dagger}\|_2 \|f - \WP_{\WU} f\|_W .\]
Since $U_r$ has orthonormal columns sRRQR~\cite[Algorithm 4]{GuE96} gives a selection operator $\WSO \in \mathbb{R}^{m\times r}$ such that $\WSO^TU_r$ is invertible. Applying Lemma~\ref{l_det} to bound $\| (\WSO^TU_r)^{-1}\|_2$ gives the desired result.
\end{proof}

The importance of this result is that the point selection can also be applied in the weighted inner product case, and the resulting error bound  similar as the {DEIM} bound in \S\ref{S:SRRQR}.


\subsection{On the interpolating property and its generalization} 
{Recall that the original DEIM formulation allows pointwise interpolation $\WSO^T\D f = \WSO^Tf$, i.e., the projection $\D f$ and $f$ match exactly for a set of indices $i_1,\dots,i_r$ determined by the columns of $\WSO$. In the case of $W$-DEIM, the following interpolation properties hold.

\begin{proposition}\label{p_w_interp}
Let $\SO^TW\WU$ be invertible and let $\D$ be as in Definition~\ref{zd:eq:DEF:WDEIM}. Then
$\SO^T W \D f = \SO^TWf$.
\end{proposition}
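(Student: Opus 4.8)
The plan is to verify the identity $\SO^T W \D f = \SO^T W f$ by direct substitution, using the full column rank hypothesis on $\SO^T W \WU$ together with the key algebraic fact that $(\SO^T W \WU)^{\dagger}$ is then a \emph{left} inverse of $\SO^T W \WU$, i.e.\ $(\SO^T W \WU)^{\dagger}(\SO^T W \WU) = \Id_r$. This mirrors the structure of the argument already used in the proof of the earlier Proposition on the bound \eqref{e_wdeim_inter}, where the same left-inverse property gave $\D \WP_{\WU} = \WP_{\WU}$.

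First I would recall Definition~\ref{zd:eq:DEF:WDEIM}, namely $\D = \WU (\SO^T W \WU)^{\dagger}\SO^T W$. Then I would left-multiply by $\SO^T W$ to get
\[
\SO^T W \D = (\SO^T W \WU)(\SO^T W \WU)^{\dagger}\SO^T W .
\]
At this point I want to collapse $(\SO^T W \WU)(\SO^T W \WU)^{\dagger}$ to the identity. This is \emph{not} automatic: for a general pseudoinverse one only has $M M^{\dagger} M = M$, and $M M^{\dagger}$ is the orthogonal projector onto $\mathcal{R}(M)$, which equals $\Id$ only when $M$ has full \emph{row} rank. Here $M = \SO^T W \WU \in \R^{s\times r}$ with $s \geq r$, so full column rank is the relevant hypothesis, and $M^{\dagger} M = \Id_r$ (the left-inverse property), \emph{not} $M M^{\dagger} = \Id_s$. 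So the collapse I actually need is different, and I would instead factor the pseudoinverse through $\WU$: write $(\SO^T W \WU)^{\dagger}\SO^T W = (\SO^T W \WU)^{\dagger}(\SO^T W \WU)\,\WU^{<T>}$ — wait, that requires $\SO^T W = (\SO^T W \WU)\WU^{<T>}$, which holds because $\WU \WU^{<T>} = \WU\WU^T W = \WP_{\WU}$ and $\SO^T W \WP_{\WU} = \SO^T W$ would be needed; this is circular. The clean route is: apply $\D$ to $f$, and compare $\SO^T W \D f$ with $\SO^T W f$ by using $\D \WP_{\WU} = \WP_{\WU}$ (established earlier) together with $(\Id_m - \D) = (\Id_m - \D)(\Id_m - \WP_{\WU})$. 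Then $\SO^T W (\Id_m-\D) f = \SO^T W (\Id_m - \D)(\Id_m - \WP_{\WU}) f$, and since $\WP_{\WU} = \WU\WU^T W$, the factor $\SO^T W (\Id_m - \D)\WU = \SO^T W \WU - \SO^T W \WU (\SO^T W \WU)^{\dagger} \SO^T W \WU = \SO^T W \WU - \SO^T W \WU = 0$ by the left-inverse property $M^{\dagger} M = \Id_r$. Hence $\SO^T W (\Id_m - \D) f$ applied to $(\Id_m - \WP_{\WU})f$ vanishes because $(\Id_m - \WP_{\WU})f \in \mathcal{R}(\Id_m - \WU\WU^T W)$ and the relevant composed map annihilates $\mathcal{R}(\WU)^{\perp_W}$...

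Actually the cleanest presentation avoids all of this. I would simply compute $\SO^T W \D f = (\SO^T W \WU)(\SO^T W \WU)^{\dagger}(\SO^T W f)$ and observe that, setting $M = \SO^T W \WU$ and $b = \SO^T W f$, the vector $M M^{\dagger} b$ is the orthogonal projection of $b$ onto $\mathcal{R}(M)$; since $M$ is invertible (the ``invertible'' hypothesis in the Proposition, as opposed to merely full column rank), $\mathcal{R}(M) = \R^s$, so $M M^{\dagger} = \Id_s$ and $\SO^T W \D f = b = \SO^T W f$. The main obstacle — really the only subtlety — is being careful about which rank hypothesis is in force: the Proposition statement says ``invertible,'' so $M M^{\dagger} = M^{\dagger} M = \Id$ and the computation is a one-liner; if one only assumed full column rank with $s>r$, the identity $\SO^T W \D f = \SO^T W f$ would \emph{fail} in general (one would only get the $W$-least-squares projection of $\SO^T W f$ onto $\mathcal{R}(\SO^T W \WU)$, exactly as in the rectangular case analyzed in \S\ref{SS=Gen-DEIM-3.1}). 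I would state the proof in the invertible case as follows.

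\begin{proof}
By Definition~\ref{zd:eq:DEF:WDEIM}, $\D = \WU(\SO^T W\WU)^{\dagger}\SO^T W$, so for any $f\in\R^m$,
\[
\SO^T W\D f = (\SO^T W\WU)(\SO^T W\WU)^{\dagger}(\SO^T W f).
\]
Since $\SO^T W\WU$ is (square and) invertible, its Moore--Penrose inverse coincides with its ordinary inverse, whence $(\SO^T W\WU)(\SO^T W\WU)^{\dagger}=\Id_r$. Therefore $\SO^T W\D f=\SO^T W f$.
\end{proof}
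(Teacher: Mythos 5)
Your final proof is correct and is essentially identical to the paper's own argument: since $\SO^T W\WU$ is invertible, $(\SO^T W\WU)^{\dagger}=(\SO^T W\WU)^{-1}$ and the identity collapses in one line. The lengthy preliminary detour about left versus right inverses is unnecessary for the invertible case, though your closing observation — that the identity would fail for merely full column rank with $s>r$, yielding only a least-squares projection — correctly matches the paper's discussion in \S\ref{SS=Gen-DEIM-3.1}.
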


This can be readily verified; since $\SO^T W\WU$ is invertible then
\[ \SO^T W \D f = (\SO^TW\WU)(\SO^TW\WU)^{-1}\SO^TWf  = \SO^TWf.\]
With the choice $\SO = L^{-T}\WSO$, Proposition~\ref{p_w_interp} simplifies to 
 \begin{equation}\label{eq:W-interpolation}
  \WSO^T (L^T \D f)=\WSO^T (L^T f) .
  \end{equation}
  Hence,  $W$-DEIM cannot in general interpolate $f\in\R^m$ at the selected indices $f_{i_j}=\phi_{i_j}(x_{i_j})$, $j=1,\ldots, r$. An exception to this  is the case that  $W$ has diagonal entries, see \S \ref{SSS::W=diag} for details. But, in many applications the discretized functions values 
may not be available through point evaluation either because there is no analytical expression or they may be sensor data corrupted by noise. In those cases, pointwise interpolation may not be possible, nor desirable -- for a most illuminating discussion see \cite{GEIM}. 
  
\subsubsection{DGEIM}
The DEIM is a realization of the discrete version of the Empirical Interpolation Method (EIM) \cite{EIM} in which interpolation was handled by only using pointwise function evaluation.  
In the same way we can interpret the interpolation condition~\eqref{eq:W-interpolation} as a discrete version of GEIM, DGEIM, as a particular case of $W$-DEIM.


{To this end, consider a more general concept of interpolation using a } family of linear functionals, see \cite[Chapter 11]{Deutsch-BestApprInnPS-book}.
  Introduce in \eqref{eq:W-interpolation} a column partition of $L = \begin{pmatrix} \ell_1 & \dots & \ell_m \end{pmatrix}$ and rewrite it as 
  \begin{equation}\label{zd:eq:gen_interpol}
  \WSO^T \left( \begin{smallmatrix} \ell_1^T\D f \cr \vdots \cr \ell_m^T\D f\end{smallmatrix}\right) = 
  \WSO^T \left( \begin{smallmatrix} \ell_1^T f \cr \vdots \cr \ell_m^T f\end{smallmatrix}\right),\;\;\mbox{i.e.,}\;\;
  \ell_{i_j}^T\D f = \ell_{i_j}^T f,\;\;j=1,\ldots, r.
  \end{equation}
  {If} we interpret $\ell_i\in\R^m$ as the discretized Riesz representation of a given linear functional, then (\ref{eq:W-interpolation}) {interpolates the desired function $f$} at selected functionals. (The point interpolation corresponds to using the point evaluation functional, $(\ell_i)_j=W_{ji}=W_{ij}=(\ell_j)_i=\delta_{ij}$, where $\delta_{ij}$ is the Kronecker delta.)


  
  

\subsection{How to ensure sparse selection}
{The original DEIM approximation was computationally efficient because it only required evaluating a small number of components of the vector $f$. However, in the computation of $\D f$, the factor $\WSO^T L^T f$ may, in the worst case, require many, or possibly all, components of $f$. This might make $W$-DEIM computationally inefficient. It is clear that the selection is sparse when the matrix $L$ is sparse. 

 The analysis is subdivided into three different cases. When the weighting matrix is sparse, or diagonal, the Cholesky factor $L$ is also sparse. When $W$ is sparse, reordering the matrix may lead to sparse factors $L$. On the other hand, if $W$ is dense, we must resort to an inexact sparse factorization. These cases are discussed below.}

\subsubsection{Diagonal weighting matrix $W$}\label{SSS::W=diag}
If $W=\mathrm{diag}(w_i)_{i=1}^{m}$, then $$L={W}^{1/2}=\mathrm{diag}(\sqrt{w_i})_{i=1}^{m},$$ {and the computation of} $\D f=W^{-1/2}U_r (\WSO^T U_r)^{-1}\WSO^T {W}^{1/2}f$ requires only the indices $i_1,\ldots, i_r$ of $f$ selected by $\WSO$. Furthermore, in this case {the interpolation condition} (\ref{eq:W-interpolation}) {simplifies to}
$$
{\sqrt{w_{i_j}}} (\D f)_{i_j} = {\sqrt{w_{i_j}}} f_{i_j},\;\;j=1,\ldots, r,
$$
i.e.{,} $\D$ is an interpolating projection.  

\subsubsection{Sparse weighting matrix $W$}
In some cases, the matrix $W$ that defines a discrete inner product is large and sparse, {and possibly contains} additional block structure, see e.g.{,} \cite[\S 5.4]{ROM-SANDIA-2014}. {Examples of sparse weighting matrices are discussed in the section on numerical experiments (Section~\ref{S=Examples}).}

{When $W$ is sparse}, one can {take advantage of} sparse factorization techniques {to} compute a pivoted factorization $\Pi^T W \Pi = {L}_s {L}_s^T$, where the permutation matrix $\Pi$ is determined to produce a sparse Cholesky factor ${L}_s$.
(In fact, {the permutation matrix} $\Pi$ {has the additional benefit of making} ${L}_s$  well conditioned for inversion by trying to improve diagonal dominance.)
Then we factor $W=LL^T$ with $L=\Pi{L}_s$, and we have 
$$\D f = \WU (\WSO^T U_r)^{-1}\WSO^T L_s^T \Pi^T f. $$
Since $\WSO^T (L_s^T \Pi^T)$ will select only a small portion of the rows of a sparse matrix $L_s^T \Pi^T$, the product $\WSO^T L_s^T \Pi^T f$ is expect to require only relatively small number of the entries of $f$. An efficient implementation of this procedure would deploy the data structure and algorithms from the sparse matrices technology.

We now see an advantage of pure algebraic selection of the interpolation indices, as featured in the Q-DEIM version of the method \cite{drmac-gugercin-DEIM-2016}. 
In Q-DEIM, the index selection is computed by a rank revealing (column) pivoting in the QR factorization of the $r\times m$  matrix $U_r^T$, where $r\ll m$. The role of pivoting is to select an $r\times r$ submatrix of $U_r$ with small inverse. Hence, as argued in \cite{drmac-gugercin-DEIM-2016}, it might be possible to find such a submatrix without having to touch all rows of $U_r$. 

One possible way to improve sparsity is to lock certain columns of $U_r^T$ (whose indices correspond to non-sparse rows of $L_s^T$) and exclude them from the pivot selection. Since $m\gg r$, it is very likely that even with some columns of $U_r^T$ excluded, the selection will perform well. In fact, pivoting in the QR factorization can be modified to prefer indices that correspond to most sparse rows of $L_s^T$. 
\subsubsection{General dense positive definite $W$}
In the most difficult case, the natural inner product is defined with large dimensional dense positive definite $W$ that is also difficult to compute. For instance, as mentioned in \S \ref{SS=UPISPS}, $W$ can be the Gramian obtained by solving a large scale Lyapunov equation, or replaced by an empirical approximation based on the method of snapshots.

If computational complexity requires enforcing sparsity of the selection operator, then we can resort to inexact sparse factorization of the form $\Pi^T W \Pi + \delta W = \widetilde{L_s} \widetilde{L_s}^T$, i.e.{,} we compute $W\approx (\Pi\widetilde{L_s}) (\Pi\widetilde{L_s})^T$. {The resulting approximation has }  the backward error $\Delta W = \Pi \delta W\Pi^T$ {as a result of a thresholding} strategy to produce the sparse factor $\widetilde{L_s}$. {We mention two possibilities here.} The incomplete Cholesky factorization is one candidate, see e.g. \cite{Lin-ICHOL}. The matrix $W$ can also be sparsified by zeroing 
entries $W_{ij}$ if e.g.{,} $|W_{ij}|/\sqrt{W_{ii}W_{jj}}$ is below some threshold. 

Let us identify, for simplicity, $W\equiv \Pi^T W\Pi = LL^T$, so that $W+\delta W = \widetilde{L_s} \widetilde{L_s}^T$.
Set $\widetilde{\D}=\WU (\WSO^T U_r)^{-1}\WSO^T \widetilde{L_s}$.
Then 
\begin{align*}
\| \D -\widetilde{\D}\|_W \leq & \>  \|(\WSO^T U_r)^{-1}\|_2 \|\WSO^T (\Id_m - \widetilde{L_s}^T L^{-T})\|_2 \\
= & \> \|(\WSO^T U_r)^{-1}\|_2 \|L^{-1}(L - \widetilde{L_s})\WSO\|_2 .
\end{align*}
Now, from $\widetilde{\D}f = \D f + (\widetilde{\D} - \D)f$ we have
\begin{align*}
\frac{\|f-\widetilde{\D}f\|_W}{\|f\|_W} \leq & \>  \frac{\|f-{\D}f\|_W}{\|f\|_W} + \| \D -\widetilde{\D}\|_W \\
\leq & \> \frac{\|f-{\D}f\|_W}{\|f\|_W} + \|(\WSO^T U_r)^{-1}\|_2 \|L^{-1}(L - \widetilde{L_s})\WSO\|_2.
\end{align*}
One can also justify using the sparsified weighting matrix in a backward sense, i.e. using $W+\delta W$ 
as the generator of the inner product. This line of reasoning via the incomplete factorization requires further analysis 
which we defer to our future work.
{Of course, in the case of dense $W$, saving the work in evaluating $f$ by the generalized interpolation (\ref{zd:eq:gen_interpol}) is nearly impossible as it may require too many entries to be practical. In that case, one can resort to point-wise interpolation that we discuss next.}
%
%

\subsection{Pointwise-interpolating $W$-DEIM}\label{ss_pointwise}
Note that in the formula for the $W$-DEIM projection in Definition \ref{zd:eq:DEF:WDEIM} there is a certain freedom in choosing $\SO$. The key in our formulation is indeed that we have left it as an adaptable device.  
In the case of the original DEIM with $W=\Id_m$, $\SO\equiv \WSO$ is a submatrix of $\Id_m$, resulting in more efficient computation of the projection \cite{DEIM}. If a generalized interpolation of the type (\ref{eq:W-interpolation}) and (\ref{zd:eq:gen_interpol}) is desired, then  $\SO^T = \WSO^T L^{-1}$ as in (\ref{zd:eq:SL}) in Proposition \ref{zd:PROP:SL} will accomplish the task. 

On the other hand, if we want point-wise interpolation 
\begin{equation}\label{e_pointwise}
{\WSO^T\D f = \WSO^T f \qquad \Longleftrightarrow \qquad } (\D f)_{i_j}=f_{i_j} ,\;\;j=1,\ldots , r
\end{equation}
 also  in the weighted case with a general positive definite $W$, then this can be obtained using the following definition. 
\begin{definition}\label{d_pointwise}
Let the weighted selection operator $\SO$ and the corresponding $W$-DEIM projector $\D$, respectively, be defined as
\begin{equation}\label{zd:eq:SL_pointwise}
\SO^T \equiv \WSO^T W^{-1}\qquad \D \equiv \WU (\WSO^T \WU)^{\dagger}\WSO^T .
\end{equation}
Here $\WU$ is $W$-orthogonal and $\WSO$ has columns from the identity matrix $\Id_m$
\end{definition}


Note that the {relations~\eqref{e_pointwise}, $\D \WP_{\WU} = \WP_{\WU},$ and the error estimate (\ref{e_wdeim_inter}) still apply. However, now, the condition number $\| \D\|_W$ will depend on the specific choice of $\WSO$. We now show how to pick the indices that determine the columns of $\WSO$.

%
%
%

The algorithm proceeds as follows. First, as in Algorithm \ref{zd:ALG:POD}, a thin generalized SVD \cite{van1976generalizing} of the $m\times n_s$ snapshot matrix $Y$ is computed and truncated to obtain low rank approximation  $Y \approx \WU\widehat{\Sigma} \widehat{V}^T$, where $\widehat{V}^T \widehat{V}=\Id_r$ and $\WU\in\R^{m\times r}$ is $W$-orthonormal, i.e., $\WU^TW\WU = \Id_r$. 

Then, the thin QR of $\WU = Q_{\WU}R_{\WU}$ is computed, and strong RRQR is applied to $Q_{\WU}^T$, to obtain the selection operator $\WSO$ (whose columns come from the $m\times m$ identity matrix). Finally, we set $\SO \equiv W^{-1} \WSO$. This procedure  is summarized in Algorithm~\ref{zd:ALG:POD_W}, where the first two steps are implemented as in Algorithm \ref{zd:ALG:POD}. The corresponding error bound is given in Theorem \ref{t_dgeim_rrqr_2}.

\begin{algorithm}[hbt]
\caption{$[\WU,\WSO,Q_{\WU}] =\mbox{$W$-POD-DEIM}(Y,W, \eta)$}
\label{zd:ALG:W-POD-DEIM-1}
\begin{algorithmic}[1]
\REQUIRE  Snapshots $Y\in\R^{m\times n_s}$, $n_s<m$. 
Symmetric positive definite $W\in\R^{m\times m}$. 
{Tuning parameter $\eta$.}	
\STATE Compute the thin generalized SVD of $Y$ as $Y = {U_Y} \Sigma V^T$ with ${U_Y^TWU_Y} = \Id_{n_s}$.
\STATE Determine an appropriate index $r$ and define $\WU={U_Y}(:,1:r)$.
\STATE Compute the thin QR factorization of $\WU = Q_{\WU}R_{\WU}$ .
\STATE Apply strong RRQR{~\cite[Algorithm 4]{GuE96} (with parameter $f=\eta$)} to $Q^T_{\WU}$ to give 
		\[ Q^T_{\WU} \begin{pmatrix} \mat{\Pi}_1 & \mat{\Pi}_2\end{pmatrix}  = \mat{Q} \begin{pmatrix} \mat{R}_{11} & \mat{R}_{22} \end{pmatrix},\;\;\Pi= \begin{pmatrix} \mat{\Pi}_1 & \mat{\Pi}_2\end{pmatrix}.\]
		\STATE $\WSO = \mat{\Pi_1}$.
\ENSURE $W$-orthogonal basis $\WU$ (optional), interpolation selection matrix $\WSO$, and orthogonal basis $Q_{\WU}$ (optional), defining 
$$\D = \WU (\WSO^T \WU)^{-1}\WSO^T  \equiv Q_{\WU} (\WSO^T  Q_{\WU})^{-1}\WSO^T .$$
\end{algorithmic}
\label{zd:ALG:POD_W}
\end{algorithm}

\begin{theorem}\label{t_dgeim_rrqr_2}
Assume that the DEIM projection operator $\D$ is defined as in Algorithm \ref{zd:ALG:POD_W}. Then
\begin{equation}\label{zd:eq:W-dgeim-bound}
\| f - \D f\|_W  \leq \sqrt{1 + \eta^2 r(m-r) } \sqrt{\kappa_2(W)} \|  f - \WP_{\WU} f\|_W . 
\end{equation}
\end{theorem}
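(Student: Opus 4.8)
The plan is to reduce the weighted bound to the Euclidean bound of Lemma~\ref{l_det} applied to the orthonormal matrix $Q_{\WU}$. First I would record the key identity that justifies the two equivalent formulas for $\D$ in Algorithm~\ref{zd:ALG:POD_W}: since $\WU = Q_{\WU} R_{\WU}$ with $R_{\WU}$ nonsingular, we have $\WU (\WSO^T \WU)^{-1}\WSO^T = Q_{\WU} R_{\WU} (R_{\WU}^T Q_{\WU}^T \cdots)$ — more simply, $\WU(\WSO^T\WU)^{-1}\WSO^T = Q_{\WU}R_{\WU}R_{\WU}^{-1}(\WSO^T Q_{\WU})^{-1}\WSO^T = Q_{\WU}(\WSO^T Q_{\WU})^{-1}\WSO^T$, so $\D$ is the \emph{ordinary} (unweighted) DEIM oblique projector built from the orthonormal basis $Q_{\WU}$ with the sparse selector $\WSO$. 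In particular $\D^2 = \D$ and, by the interpolation property of the ordinary DEIM projector, $\WSO^T\D = \WSO^T$, so $\D$ satisfies~\eqref{e_pointwise}.

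Next I would invoke the projection-error inequality~\eqref{e_wdeim_inter}, which applies since $\SO^T W\WU = \WSO^T W^{-1}W\WU = \WSO^T\WU$ is the (square, invertible) matrix appearing in $\D$, giving full column rank and hence $\D\WP_{\WU}=\WP_{\WU}$. Thus $\| f-\D f\|_W \le \|\D\|_W \,\|f-\WP_{\WU} f\|_W$, and everything comes down to bounding $\|\D\|_W$. Using Remark~\ref{RE:||W}, $\|\D\|_W = \|L^T\D L^{-T}\|_2$. Writing $\D = Q_{\WU}(\WSO^T Q_{\WU})^{-1}\WSO^T$ and $M \equiv \WSO^T Q_{\WU}$, we get $L^T\D L^{-T} = (L^T Q_{\WU})M^{-1}(\WSO^T L^{-T})$, and I would bound the spectral norm of this product by $\|L^T Q_{\WU}\|_2 \, \|M^{-1}\|_2 \, \|L^{-1}\WSO\|_2$. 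The two outer factors are controlled by the condition number of $W$: since $Q_{\WU}$ has orthonormal columns, $\|L^T Q_{\WU}\|_2 \le \|L^T\|_2 = \|W\|_2^{1/2}$, and since $\WSO$ is a submatrix of $\Id_m$ (also orthonormal columns), $\|L^{-1}\WSO\|_2 \le \|L^{-1}\|_2 = \|W^{-1}\|_2^{1/2}$; multiplying gives $\sqrt{\kappa_2(W)}$.

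It remains to bound $\|M^{-1}\|_2 = \|(\WSO^T Q_{\WU})^{-1}\|_2$. This is exactly the quantity Lemma~\ref{l_det} controls: strong RRQR~\cite[Algorithm 4]{GuE96} is applied to $Q_{\WU}^T$ (Step~4), $Q_{\WU}$ has $r$ orthonormal columns, and $\WSO = \Pi_1$ is the resulting selection of $r$ columns of $\Id_m$, so $\|(\WSO^T Q_{\WU})^{-1}\|_2 \le \sqrt{1+\eta^2 r(m-r)}$. Combining the three bounds yields $\|\D\|_W \le \sqrt{1+\eta^2 r(m-r)}\,\sqrt{\kappa_2(W)}$, and substituting into~\eqref{e_wdeim_inter} gives~\eqref{zd:eq:W-dgeim-bound}. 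I expect the only subtle point to be the bookkeeping in the first step — verifying that the $Q_{\WU}$-based and $\WU$-based formulas for $\D$ genuinely coincide and that~\eqref{e_wdeim_inter} is applicable with $\SO = W^{-1}\WSO$ — after which the norm estimate is a clean three-factor split with no loss beyond the honest $\sqrt{\kappa_2(W)}$; there is no attempt here to replace $\kappa_2(W)$ by the optimally-scaled quantity, which is deferred to the next subsection.
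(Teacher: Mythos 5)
Your proof is correct and follows essentially the same route as the paper: reduce $\D$ to the orthonormal form $Q_{\WU}(\WSO^T Q_{\WU})^{-1}\WSO^T$ via the thin QR factorization, invoke the projection inequality~\eqref{e_wdeim_inter}, extract the factor $\sqrt{\kappa_2(W)}$ from $\|L^T\D L^{-T}\|_2$, and bound $\|(\WSO^T Q_{\WU})^{-1}\|_2$ by Lemma~\ref{l_det} applied through the sRRQR selection. The only cosmetic difference is that you bound the three factors $\|L^TQ_{\WU}\|_2\,\|(\WSO^TQ_{\WU})^{-1}\|_2\,\|L^{-1}\WSO\|_2$ directly, whereas the paper first writes $\|\D\|_W\le\sqrt{\kappa_2(W)}\,\|\D\|_2$ and then evaluates $\|\D\|_2=\|(\WSO^TQ_{\WU})^{-1}\|_2$; the two computations are equivalent.
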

\begin{proof}
Note that  $\| \D\|_W = \| L^T \D L^{-T}\|_2  \leq \sqrt{\kappa_2(W)} \| \D\|_2.$ We now bound $\| \D\|_2$. Consider the thin QR of $\WU = Q_{\WU}R_{\WU}$, where $R_{\WU}$ must be nonsingular. Then 
$$
 \D =  Q_{\WU}R_{\WU} (\WSO^TQ_{\WU}R_{\WU})^{-1}\WSO^T 
 =  Q_{\WU} (\WSO^TQ_{\WU})^{-1}\WSO^T.
 $$ 
Since $Q_{\WU}$ and $\WSO$ have orthonormal columns, $\|\D\|_2 = \|(\WSO^TQ_{\WU})^{-1}\|_2$. The rest of the proof is similar to Theorem~\ref{t_dgeim_rrqr}. 
\end{proof}

\subsubsection{Scaling invariant error bound}\label{ss_pointwise_scaling}

Note that, compared to Theorem~\ref{t_dgeim_rrqr}, the error bound (\ref{zd:eq:W-dgeim-bound}) has an additional factor of $\sqrt{\kappa_2(W)}$. For highly ill-conditioned matrices $W$, this considerably inflates the error bound and possibly the actual error as well. It is instructive to see how a simple trick can improve this undesirable situation. 

Let $\Delta=\mathrm{diag}(\sqrt{W_{ii}})_{i=1}^m$ and $W_s= \Delta^{-1} W \Delta^{-1}$; {note that this scaling ensures} $(W_s)_{ii}=1$ for all $i=1,\dots,m$. It is well known (see \cite{slu-69}) that this diagonal equilibration nearly minimizes the spectral condition number over all diagonal scalings,  
\begin{equation}\label{zd:eq:Sluis}
\kappa_2(W_s)\leq m \min_{D{\in\mathcal{D}^m}}\kappa_2(DWD),
\end{equation}
{where $\mathcal{D}^m$ is the space of diagonal $m\times m$ matrices. }
The task is to eliminate the scaling factor $\Delta$ from the bound on $\| \D\|_W$ (by {the use of} a different subset selection) and to replace $\sqrt{\kappa_2(W)}$ with $\sqrt{\kappa_2(W_s)}$ -- {which can be} a substantial improvement {for certain applications of interest}. To that end, we must examine how $W$ influences the structure of $\widehat{U}$, and interweave assembling of $\widehat{U}$ with the construction of the DEIM selection operator.  The selection operator is $\SO^T = \WSO^T W^{-1}$, as in Algorithm \ref{zd:ALG:W-POD-DEIM-1}.

We use the expression for the weighted POD basis $\widehat{U}$ as in Algorithm \ref{zd:ALG:POD}, i.e. $\widehat{U}=L^{-T}U_r$, where  $W=LL^T$ and $U_r^T U_r=\Id_r$.
If we define $L_s = \Delta^{-1}L$, then $W_s = L_s L_s^T$; $L_s$ has rows of unit Euclidean length, and, since $\D = \widehat{U}(\WSO^T\widehat{U})^{-1}\WSO$,
$$
L^T \D L^{-T} = U_r (\WSO^T \Delta^{-1}L_s^{-T}U_r)^{-1}\WSO^T \Delta^{-1}L_s^{-T} .
$$
The key observation is that $\WSO^T\Delta^{-1} = \widehat{\Delta}^{-1}\WSO^T$, where $\widehat{\Delta}$ is a diagonal matrix with the vector
$\WSO^T \Delta$ on its diagonal. This cancels out $\Delta$,
$$
L^T \D L^{-T} = U_r (\widehat{\Delta}^{-1}\WSO^T L_s^{-T}U_r)^{-1}\widehat{\Delta}^{-1}\WSO^T L_s^{-T} = 
U_r (\WSO^T L_s^{-T}U_r)^{-1}\WSO^T L_s^{-T} .
$$
Let now $L_s^{-T}U_r = Q_{\WU} R_s$ be the QR factorization. (Note that $L_s^{-T}U_r = \Delta \widehat{U}$.) Then
$$
\D = \Delta^{-1} Q_{\WU} (\WSO^T Q_{\WU})^{-1}\widehat{\Delta}\WSO^T,\;\;
L^T \D L^{-T} = L_s^T Q_{\WU} (\WSO^T Q_{\WU})^{-1}\WSO^T L_s^{-T} ,
$$
and we conclude that DEIM selection using $Q_{\WU}$ yields the desired bound
$$
\| \D\|_W  \leq \|L_s^T\|_2\|L_s^{-T}\|_2 \| (\WSO^T Q_{\WU})^{-1}\|_2 = 
\sqrt{\kappa_2(W_s)} \| (\WSO^T Q_{\WU})^{-1}\|_2 .
$$
These considerations are summarized in Algorithm \ref{zd:ALG:POD_W-s} and Theorem \ref{zd:TM:W-s-DEIM}. 
\begin{algorithm}[hbt]
	\caption{$[\WU,\WSO,Q_{\WU},\Delta, \widehat{\Delta}] =\mbox{$W$-$\Delta$-POD-DEIM}(Y,W\equiv LL^T,\eta)$}
	\label{zd:ALG:W-POD-DEIM-2}
	\begin{algorithmic}[1]
		\REQUIRE  Snapshots $Y\in\R^{m\times n_s}$, $n_s<m$. Symmetric positive definite $W\in\R^{m\times m}$.
		{Tuning parameter $\eta$.}	
		\STATE Compute the thin SVD of $L^T Y$ as $L^T Y = {U} \Sigma V^T$. \COMMENT{$Y=(L^{-T}U)\Sigma V^T$ is a GSVD of $Y$, with $W$-orthogonal $L^{-T}U$ and orthogonal $V$.}
		\STATE Determine an appropriate index $r$ and define $U_r={U}(:,1:r)$.
		\STATE $\Delta=\mathrm{diag}(\sqrt{W_{ii}})_{i=1}^m$ ; $L_s = \Delta^{-1} L$.
		\STATE Compute the thin QR factorization of $L_s^{-T}U_r$ as $L_s^{-T}U_r = Q_{\WU}R_{s}$ .
		\STATE Apply strong RRQR {~\cite[Algorithm 4]{GuE96} (with parameter $f=\eta$)} to $Q^T_{\WU}$ to give 
		\[ Q^T_{\WU} \begin{pmatrix} \mat{\Pi}_1 & \mat{\Pi}_2\end{pmatrix}  = \mat{Q} \begin{pmatrix} \mat{R}_{11} & \mat{R}_{22} \end{pmatrix},\;\;\Pi= \begin{pmatrix} \mat{\Pi}_1 & \mat{\Pi}_2\end{pmatrix}.\]
		\STATE $\WSO = \mat{\Pi_1}$; $\widehat{\Delta}= \mathrm{diag}(\WSO^T \mathrm{diag}(W))$.
		\ENSURE $W$-orthogonal basis $\WU=L^{-T}U_r$ (optional), interpolation selection matrix $\WSO$, diagonal matrices $\Delta$, $\widehat{\Delta}$ (optional), and orthogonal basis $Q_{\WU}$ (optional), defining 
		$$\D = \WU (\WSO^T \WU)^{-1}\WSO^T  \equiv \Delta^{-1} Q_{\WU} (\WSO^T  Q_{\WU})^{-1} \widehat{\Delta}\WSO^T \equiv \Delta^{-1} Q_{\WU} (\WSO^T  Q_{\WU})^{-1} \WSO^T \Delta.$$
	\end{algorithmic}
	\label{zd:ALG:POD_W-s}
\end{algorithm}

\begin{theorem}\label{zd:TM:W-s-DEIM}
	Assume that the DEIM projection operator $\D$ is defined as in Algorithm \ref{zd:ALG:POD_W-s}. Then
	\begin{equation}\label{zd:eq:W-s-dgeim-bound}
	\| f - \D f\|_W  \leq \sqrt{1 + \eta^2 r(m-r) } \sqrt{\kappa_2(W_s)} \|  f - \WP_U f\|_W . 
	\end{equation}
\end{theorem}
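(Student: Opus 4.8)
The plan is to follow essentially the same chain of reasoning that proves Theorem~\ref{t_dgeim_rrqr_2}, but replacing the crude bound $\|\D\|_W \leq \sqrt{\kappa_2(W)}\|\D\|_2$ by the scaling-aware computation carried out in \S\ref{ss_pointwise_scaling}. First I would invoke the error estimate \eqref{e_wdeim_inter}, which applies verbatim here since $\D$ is still a nontrivial oblique projector onto $\mathcal{R}(\WU)$ with $\D\WP_{\WU}=\WP_{\WU}$ (the selection operator $\SO^T=\WSO^T W^{-1}$ satisfies $\SO^T W\WU = \WSO^T\WU$, which is invertible by construction in Algorithm~\ref{zd:ALG:POD_W-s}). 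This reduces the theorem to bounding the condition number $\|\D\|_W$.

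Next I would use the identity derived in the discussion preceding the theorem: with $L_s = \Delta^{-1}L$ so that $W_s = L_s L_s^T$, and with $Q_{\WU}$ obtained from the QR factorization $L_s^{-T}U_r = Q_{\WU}R_s$, one has the explicit representation
\begin{displaymath}
L^T \D L^{-T} = L_s^T Q_{\WU}\,(\WSO^T Q_{\WU})^{-1}\,\WSO^T L_s^{-T}.
\end{displaymath}
The key cancellation step that makes this work is the commutation relation $\WSO^T\Delta^{-1} = \widehat{\Delta}^{-1}\WSO^T$ (valid because $\WSO$ picks columns of $\Id_m$ and $\Delta$ is diagonal), which removes the diagonal scaling $\Delta$ from the middle of the expression. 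Taking the spectral norm and using submultiplicativity together with the fact that $\WSO^T$ and $Q_{\WU}$ have orthonormal columns gives
\begin{displaymath}
\|\D\|_W = \|L^T\D L^{-T}\|_2 \leq \|L_s^T\|_2\,\|L_s^{-T}\|_2\,\|(\WSO^T Q_{\WU})^{-1}\|_2 = \sqrt{\kappa_2(W_s)}\,\|(\WSO^T Q_{\WU})^{-1}\|_2.
\end{displaymath}

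Finally I would bound $\|(\WSO^T Q_{\WU})^{-1}\|_2$ exactly as in the proof of Theorem~\ref{t_dgeim_rrqr_2}: since $Q_{\WU}$ has $r$ orthonormal columns and $\WSO=\Pi_1$ is the selection produced by applying strong RRQR~\cite[Algorithm 4]{GuE96} with tuning parameter $\eta$ to $Q_{\WU}^T$, Lemma~\ref{l_det} (with $V=Q_{\WU}$) yields $\|(\WSO^T Q_{\WU})^{-1}\|_2 \leq \sqrt{1+\eta^2 r(m-r)}$. Combining the three displayed estimates gives \eqref{zd:eq:W-s-dgeim-bound}. The only genuinely delicate point — and the one I would present most carefully rather than skip — is verifying the commutation identity $\WSO^T\Delta^{-1}=\widehat{\Delta}^{-1}\WSO^T$ and tracking it through the algebra so that the $\Delta$ factors truly cancel and only $\kappa_2(W_s)$, not $\kappa_2(W)$, survives; everything downstream is then a routine reuse of Lemma~\ref{l_det}. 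One should also note that the bound is stated relative to $\WP_U$ rather than $\WP_{\WU}$, but these coincide since $\WP_U = \WU\WU^T W$ is exactly the $W$-orthogonal projector onto $\mathcal{R}(\WU)=\mathcal{R}(U_r)$ up to the change of variables $U_r = L^T\WU$.
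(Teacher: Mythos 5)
Your proposal is correct and follows essentially the same route as the paper: the paper's proof of Theorem~\ref{zd:TM:W-s-DEIM} is precisely the derivation in \S\ref{ss_pointwise_scaling} preceding it (the cancellation of $\Delta$ via $\WSO^T\Delta^{-1}=\widehat{\Delta}^{-1}\WSO^T$, the representation $L^T\D L^{-T}=L_s^TQ_{\WU}(\WSO^TQ_{\WU})^{-1}\WSO^TL_s^{-T}$, and the bound $\|\D\|_W\leq\sqrt{\kappa_2(W_s)}\|(\WSO^TQ_{\WU})^{-1}\|_2$), combined with \eqref{e_wdeim_inter} and Lemma~\ref{l_det} applied to $Q_{\WU}$ exactly as you do. Your closing observation that $\WP_U$ in the statement is just the $W$-orthogonal projector $\WP_{\WU}$ onto $\mathcal{R}(\WU)$ is also accurate.
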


\begin{remark}
	{\em
		It follows from (\ref{zd:eq:Sluis}) that the DEIM projection error bound (\ref{zd:eq:W-s-dgeim-bound}) that applies to Algorithm \ref{zd:ALG:POD_W-s}
		is never much worse ($\sqrt{\kappa_2(W_s)}\leq \sqrt{m}\sqrt{\kappa_2(W)}$) and it is potentially substantially better\footnote{Take e.g. diagonal and highly ill-conditioned $W$.} ($\sqrt{\kappa_2(W_s)} \ll \sqrt{\kappa_2(W)}$) than the estimate (\ref{zd:eq:W-dgeim-bound}) that holds for Algorithm \ref{zd:ALG:POD_W}.
		Although the two algorithms determine $\WSO$ from different orthonormal matrices, the factor $\sqrt{1 + \eta^2 r(m-r)}$ is the same, because of the property of the sRRQR. 
	}
\end{remark}
\begin{remark}
	{\em
In both Algorithm \ref{zd:ALG:POD_W-s} and Algorithm \ref{zd:ALG:POD_W}, the sRRQR and computation of $\WSO$ can be replaced with the Q-DEIM selection \cite{drmac-gugercin-DEIM-2016}, which is more efficient, essentially nearly as robust, but with weaker theoretical bound. However, the weaker upper bound on $\kappa$ is unlikely to make a substantial difference in practical computations, and both algorithms can be implemented using Q-DEIM.
}
\end{remark}
\begin{remark}
	{\em
		For better numerical properties, the Cholesky factorization can be computed with pivoting, $\Pi^T W \Pi = LL^T$, i.e. $W = (\Pi L)(\Pi L)^T$, and we can easily modify Algorithm \ref{zd:ALG:POD_W-s} to work implicitly with $\Pi L$ instead of $L$.		
	}
\end{remark}
\begin{remark}
	{\em
		Note that the computation in Line 1. of Algorithm \ref{zd:ALG:W-POD-DEIM-2} can be rephrased as the GSVD of $Y$, $Y = U_Y \Sigma V^T$, where $U_Y = L^{-T}U$ is $W$-orthogonal, $U_Y^T W U_Y=\Id_m$; see Algorithm \ref{zd:ALG:W-POD-DEIM-1}. Then the matrix $\widehat{U}$ optionally returned by Algorithm 	\ref{zd:ALG:W-POD-DEIM-2} is $\widehat{U}=U_Y(:,1:r)=L^{-T}U_r$. Since $L_s^{-T}=\Delta L^{-T}$, the matrix $L_s^{-T}U_r$ in Line 4. can be expressed as $L_s^{-T}U_r = \Delta L^{-T}U_r=\Delta\widehat{U}$.	
	}
\end{remark}

\section{Numerical Examples}\label{S=Examples}
In this section, we show numerical examples that highlight the benefits of our proposed algorithms. 
\subsection{Example 1}
This example is based on~\cite[Example 3.1]{drmac-gugercin-DEIM-2016}. In this example we study the performance of sRRQR~\cite[Algorithm 4]{GuE96} for subset selection compared to the DEIM approach~\cite{DEIM} and Q--DEIM~\cite{drmac-gugercin-DEIM-2016}. Therefore, we let the weighting matrix $W=\Id_m$. Let 
\begin{equation}\label{e_func_ex1}
 {f}(t;\mu) = 10\exp(-\mu t) \left(\cos(4\mu t) + \sin(4\mu t)\right), \qquad 1 \leq t \leq 6, \;\;\; 0 \leq \mu \leq \pi. 
\end{equation}
The snapshot set is generated by taking $40$ evenly spaced values of $\mu$ and $n=10,000$ evenly spaced points in time. The snapshots are collected in a matrix of size $10000\times 40$, the thin SVD of this matrix is computed and the left singular vectors corresponding to the first $34$ modes are used to define $U_r$. 

To test the interpolation accuracy, we compute its value using the DEIM approximation at $200$ evenly spaced points in the $\mu$-domain. Three different subset selection procedures were used: DEIM, Pivoted QR labeled Q--DEIM, and sRRQR. In each case, we report the relative error defined as 
\[ \text{Rel Err}(\mu_j) \> \equiv \> \frac{\|f_{\mu_j} - \D f_{\mu_j} \|_2}{\| f_{\mu_j}\|_2} \qquad j = 1,\dots,200.\] The results of the comparison are provided in Figure~\ref{f_example1}. 
\begin{figure}[!ht]\centering
$\quad$
\includegraphics[scale=0.3]{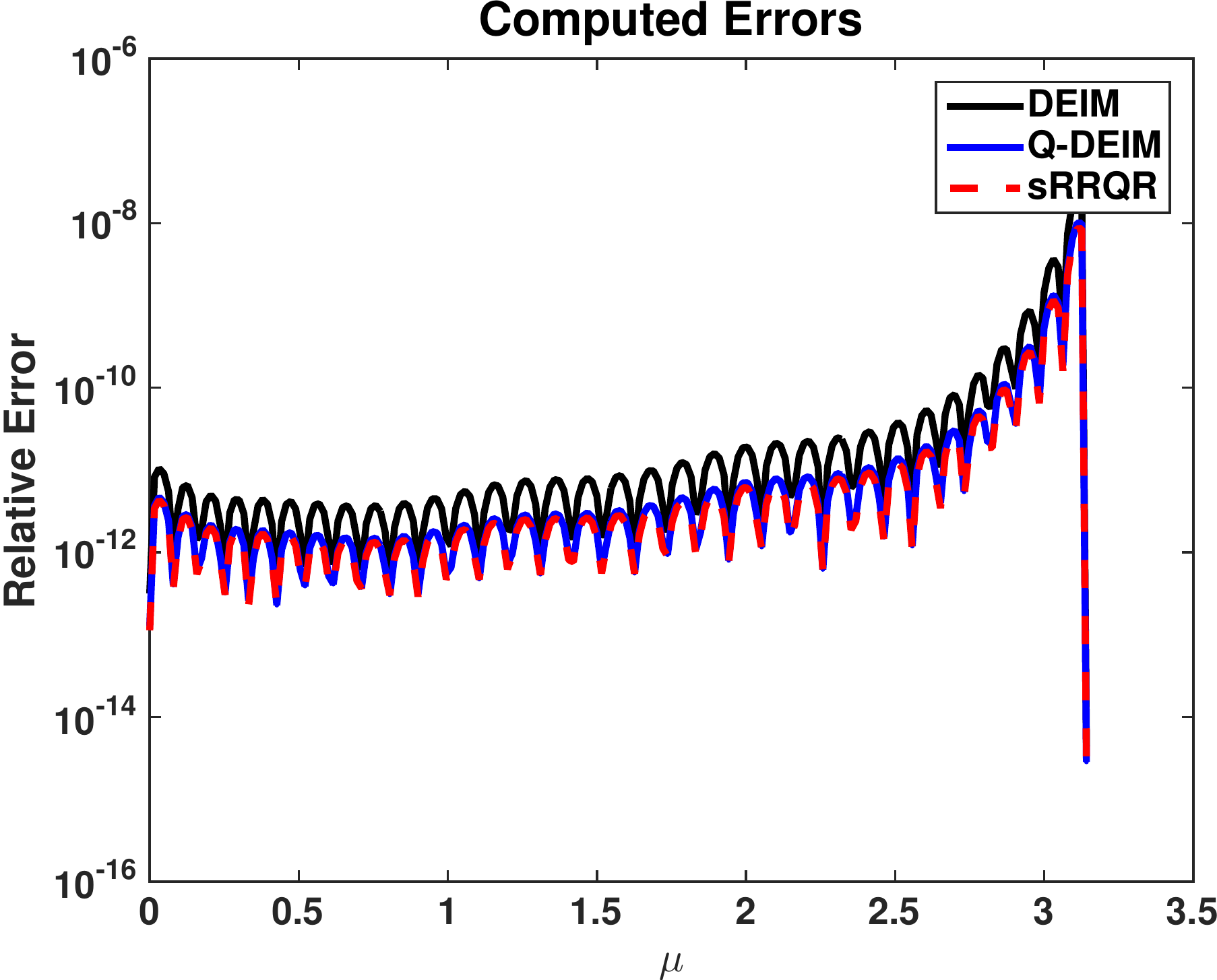}
\includegraphics[scale=0.3]{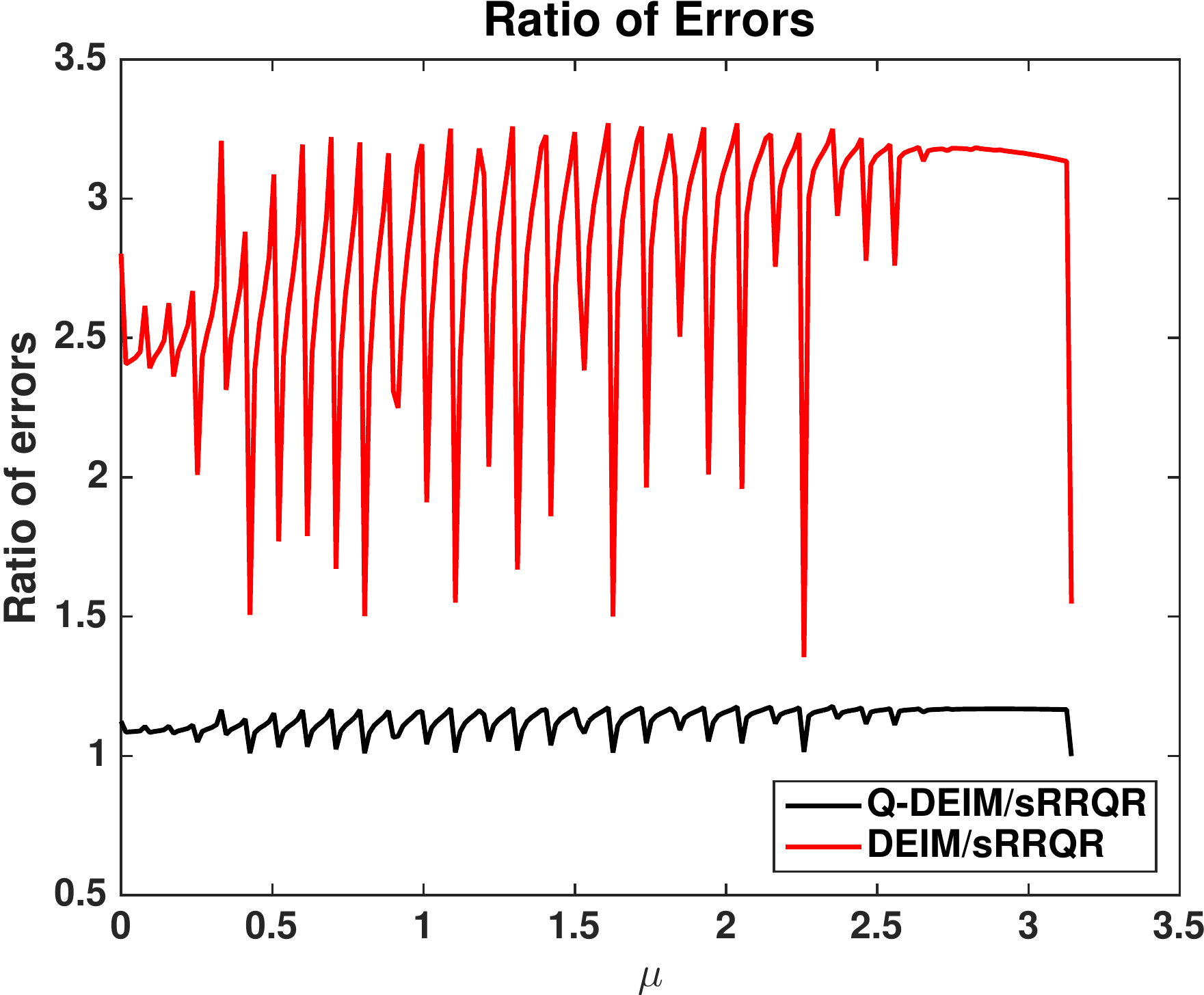}
\caption{Comparison of the approximation errors used to approximate~\eqref{e_func_ex1}. (left) The relative errors are plotted for different subset selection scheme. (right) Ratio of relative errors of (1) Q--DEIM and sRRQR, and (2) DEIM and sRRQR.  }
\label{f_example1}
\end{figure}
We observe that while all three methods are very accurate, Q--DEIM and sRRQR are much more accurate compared to DEIM for this example. Furthermore, from the right plot in Figure~\ref{f_example1}, we see that sRRQR is more accurate compared to both Q--DEIM and sRRQR. In practice, the performance of sRRQR is very similar to Q--DEIM, except for some adversarial cases in which Q--DEIM can fail spectacularly. In the subsequent examples, we use sRRQR for subset selection.

\subsection{Example 2}
Our next example is inspired by the Nonlinear RC-Ladder circuit, which is a standard benchmark problem for model reduction (see, for example~\cite[Section 6]{condon2004empirical}). The underlying model is given by a dynamical system of the form
\[ D\frac{dx(t)}{dt} = \begin{pmatrix}-g(x_1(t)) - g(x_1(t)-x_2(t)) \\ g(x_1(t)-x_2(t)) - g(x_2(t)-x_3(t)) \\ \vdots \\ g(x_{N-1}(t)-x_N(t)) \end{pmatrix} + \begin{pmatrix} u(t) \\ 0 \\ \vdots \\ 0 \end{pmatrix}, \]
where $g(x) = \exp(40 x) + x - 1$ and $u(t) = \exp(-t)$ and $N=1000$. The diagonal matrix $D$ is chosen to have entries 
\[ D_{ii} = \left\{ \begin{array}{ll} 1 & 251\leq i \leq 750 \\ \frac{1}{2} & \text{otherwise}\end{array}\right.\]
The diagonal matrix $D$ induces the norm $\| \cdot\|_D$ and the relative error between the full and the reduced order models are measured in this norm.
\begin{figure}[!ht]\centering
$\quad$
\includegraphics[scale=0.3]{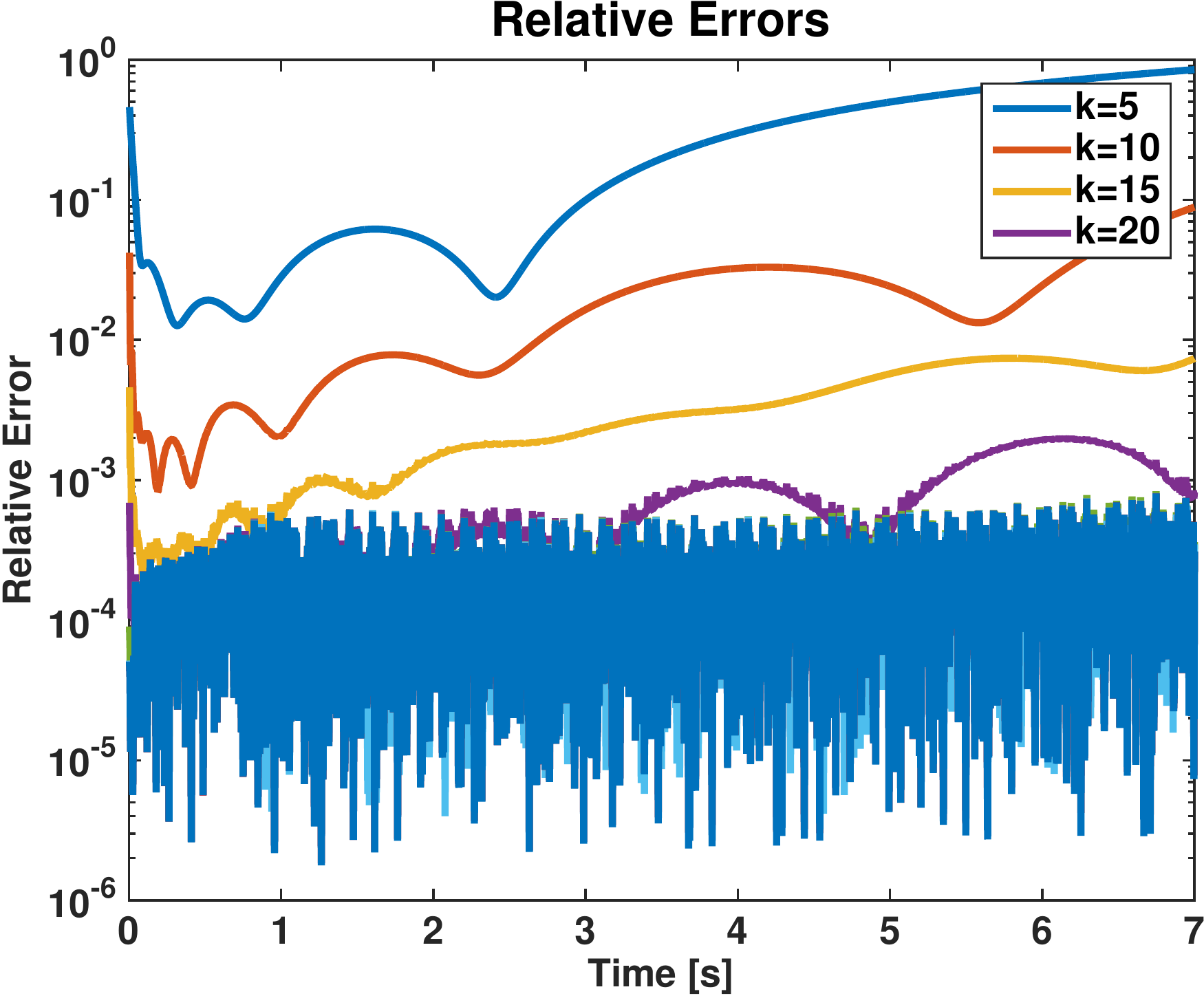}
\includegraphics[scale=0.3]{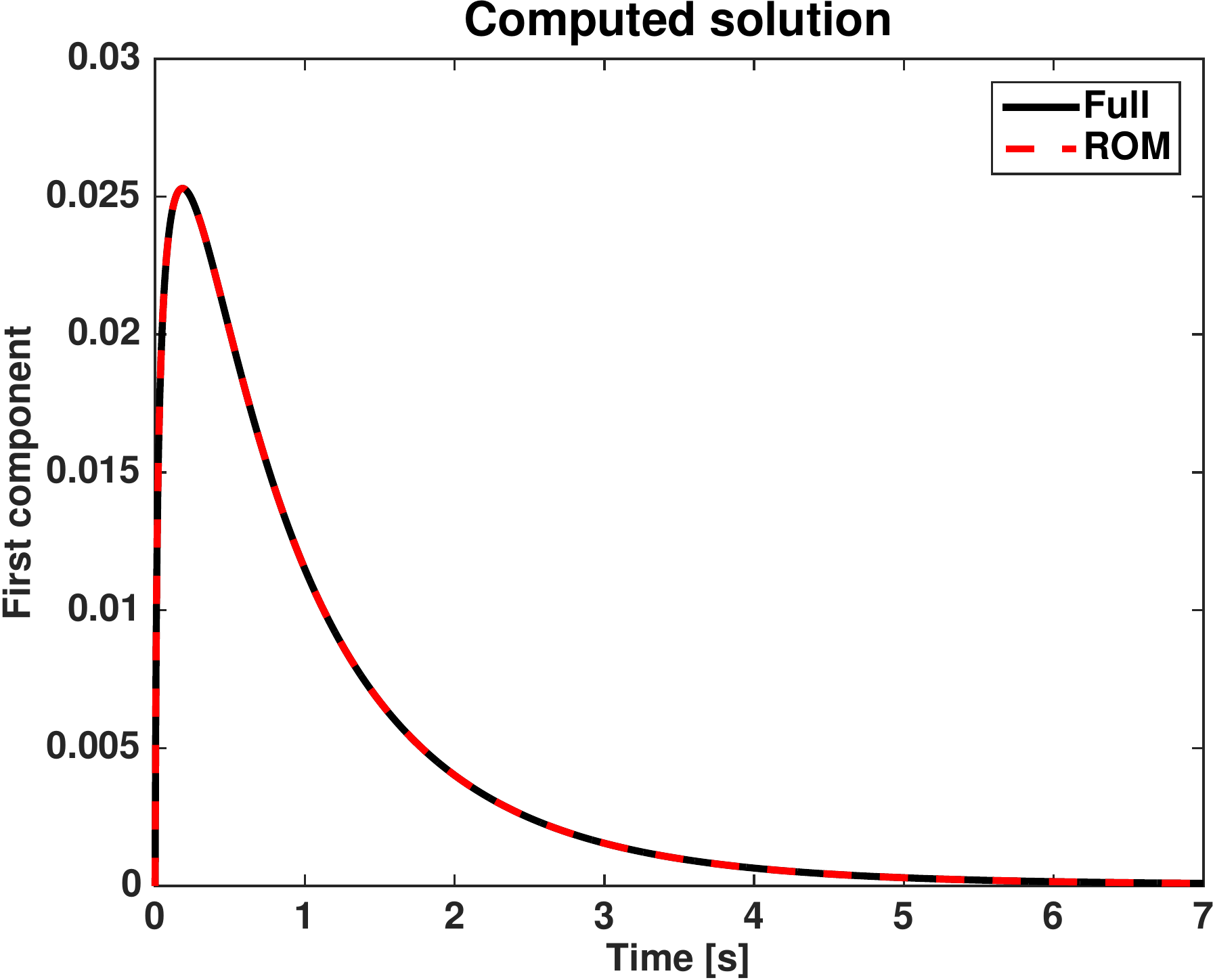}
\caption{The plots refer to Example 2. (left) Relative error of the full and reduced order systems for different times, as a function of number of basis vectors. (right) The $W$-DEIM based reconstruction of the first component $x_1(t)$ as a function of time with $k=40$. }
\label{f_example2}
\end{figure}

The dynamical system is simulated over $t=[0,7]$ seconds and $2000$ snapshots of the dynamical system and the nonlinear function are collected with equidistant time steps. Based on the decay rate of the snapshots, we vary the number of basis vectors from $5$ to $40$. The relative error is defined to be 
\[  \text{Rel. Err.}(t) \equiv \frac{\|x(t)-\hat{x}(t) \|_D}{\|x(t)\|_D},\]
where $x(t)$ is the solution of the dynamical system at time $t$, whereas $\hat{x}(t)$ is the reduced order approximation at the same time. The relative error as a function of number of retained basis vectors is plotted in left panel of Figure~\ref{f_example2}. On the right, the reconstruction of the first component of the dynamical system $x_1(t)$ is shown; here $k=40$ basis vectors were retained. As can be seen, the reconstruction error is low and the $W$-DEIM, indeed, approximates the large-scale dynamical system accurately.   

\subsection{Example 3}
This example is inspired by~\cite[Section 2.3]{peherstorfer2014localized}. The spatial domain is taken to be $\Omega = [0,1]^2$ and the parameter domain is $\mathcal{D} = [0,1]^2$. We define a function $g : \Omega \times \mathcal{D} \rightarrow \mathbb{R}$ which satisfies
\[ g(x_1,x_2;\mu_1,\mu_2) \equiv \frac{1}{\sqrt{h(x_1;\mu_1) + h(x_2;\mu_2) + 0.1^2 }}.\]  
where $h(z;\mu) = ((1-z)-(0.99\cdot\mu-1))^2 $. The function that is to be interpolated is 
\begin{eqnarray}
 f({x};{\mu}) = &  g(x_1,x_2;\mu_1,\mu_2) + g(1-x_1,1-x_2; 1-\mu_1, 1-\mu_2) \\
& + g(1-x_1,x_2; 1-\mu_1,\mu_2) + g(x_1,1-x_2; \mu_1, 1-\mu_2). 
\end{eqnarray}
Depending on the parameter $\mu$, it has a sharp peak in one of the four corners of $\Omega$. The function is discretized on a $100\times 100$ grid in $\Omega$, and parameter samples are drawn from a $25\times 25$ equispaced grid in $\mathcal{D}$. These $625$ snapshots are used to construct the DEIM approximation. We choose three different weighting functions: $W_1$ is the identity matrix, $W_2$ is the weighting matrix corresponding to the $L^2(\Omega)$ inner product, and $W_3$ is the weighting matrix corresponding to the $H^1(\Omega)$ inner product. 
\begin{figure}[!ht]\centering
$\quad$
\includegraphics[scale=0.3]{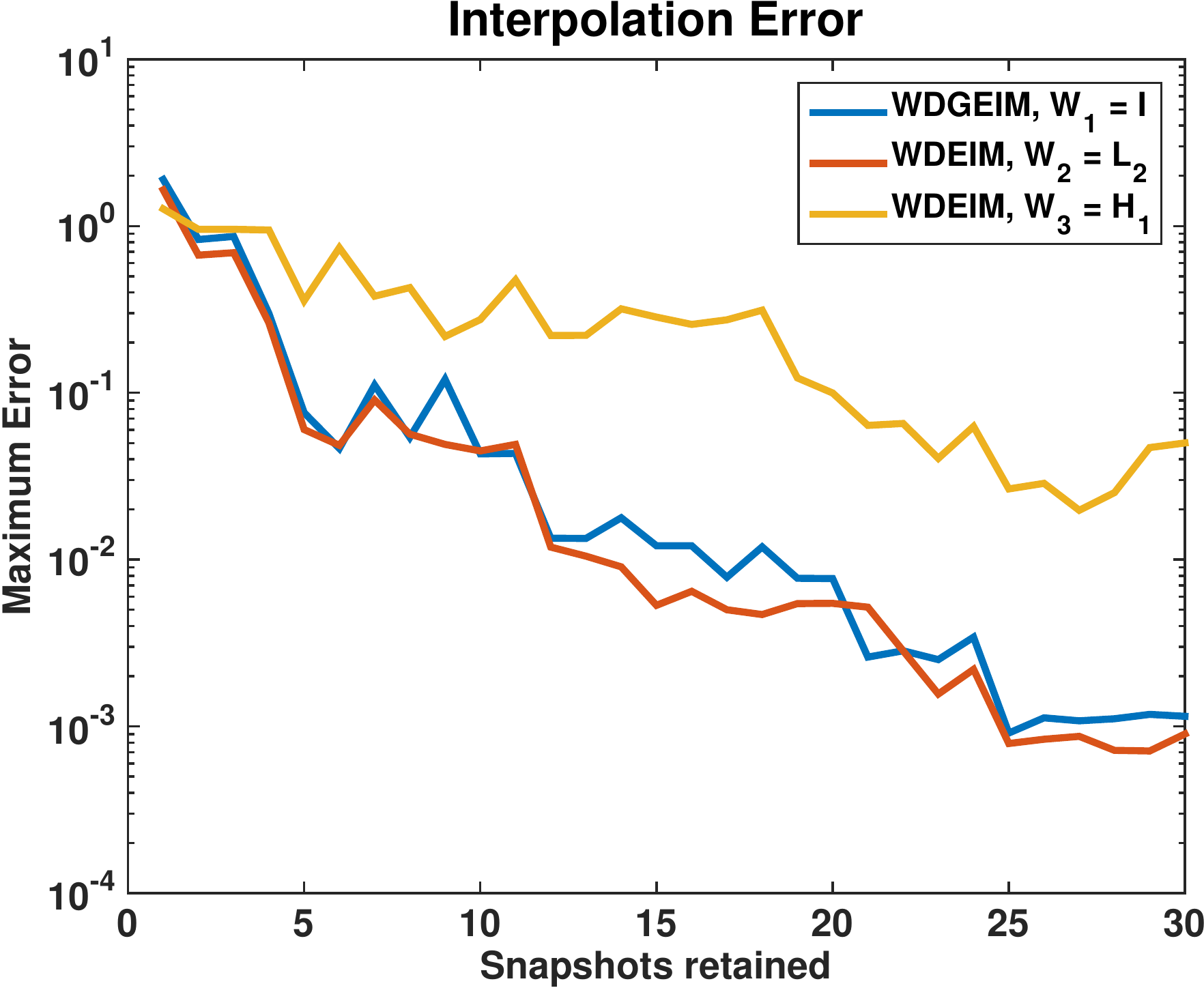}
\includegraphics[scale=0.3]{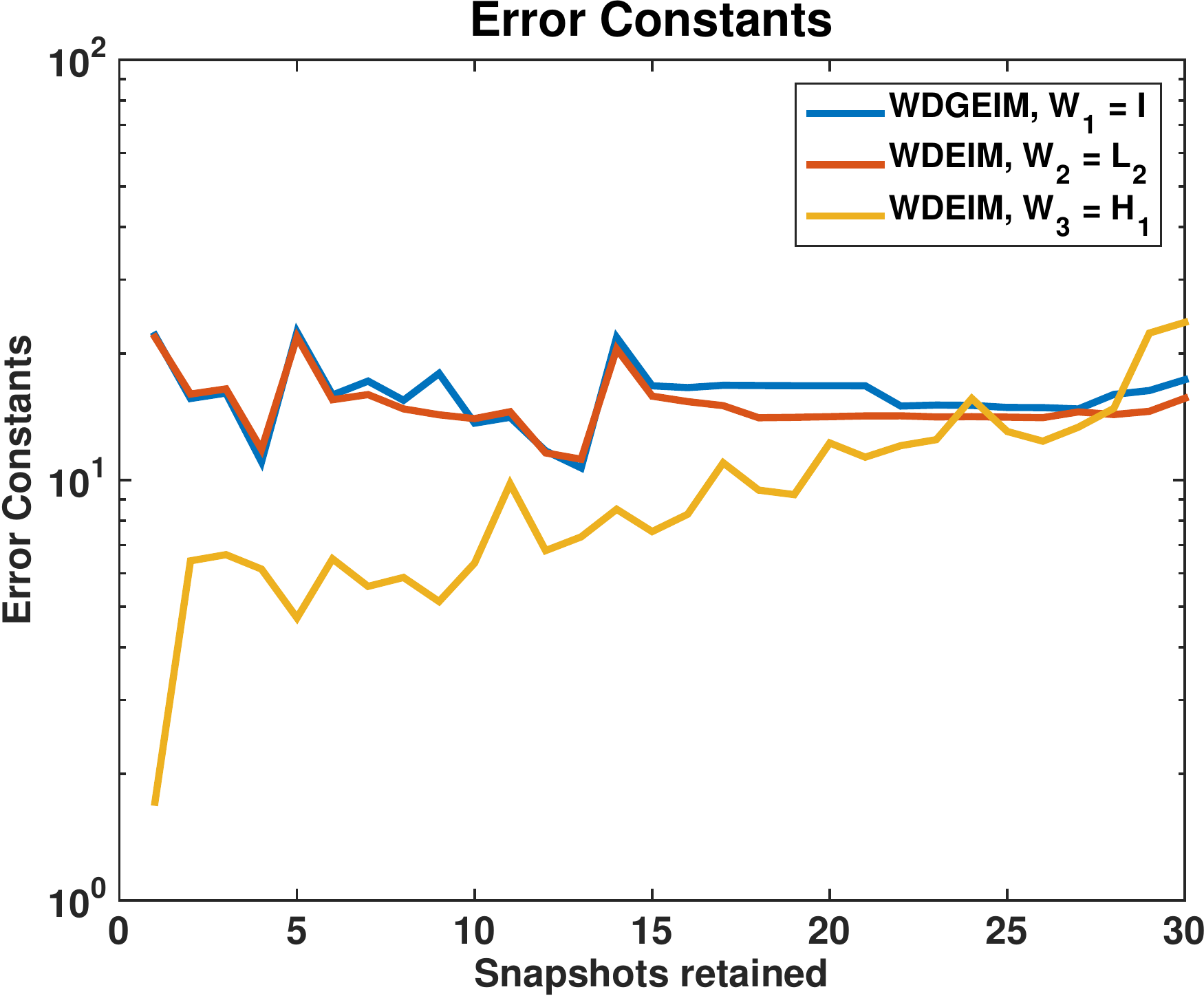}
\caption{(left) {Maximum} relative error {over the test parameters} as a function of number of basis vectors used in the DEIM approximation. (right) Error constants $\| \D\|_W = \| (\WSO^T U_r)^{-1}\|$. Three different weighting matrices were used:   $W_1$ is the identity matrix, $W_2$ is the weighting matrix corresponding to the $L^2(\Omega)$ inner product, and $W_3$ is the weighting matrix corresponding to the $H^1(\Omega)$ inner product.}
\label{f_example3}
\end{figure}

We then compute the average relative error over a test sample corresponding to a $11\times 11$ equispaced grid in $\mathcal{D}$. The relative error is defined to be 
\[ \text{Rel. Err.}_j = \frac{\| f - \D f\|_{W_j}}{\| f\|_{W_j}} \qquad j=1,2,3.  \]
The POD basis is computed using Algorithm~\ref{zd:ALG:POD}, whereas the subset selection is done using sRRQR~\cite[Algorithm 4]{GuE96}. The results of the interpolation errors as a function of number of DEIM interpolation points retained, is displayed in the left panel of Figure~\ref{f_example3}. On the right hand panel of the same figure, we display the error constants $\| \D\|_W = \| (\WSO^T U_r)^{-1}\|$. 
As can be seen, although the error constants increase with increasing number of basis vectors, the overall interpolation error decreases resulting an effective approximation.

\subsection{Example 4}\label{ss_ex_4} This is a continuation of Example 3. We use the same setup as before; however, we compare the different algorithms for $W$-DEIM. In `Method 1' we use  Algorithm~\ref{zd:ALG:POD} to generate the POD basis, while the subset selection is done using sRRQR~\cite[Algorithm 4]{GuE96}. The error constant for this method is $\eta_1 \equiv \| (\WSO^T U_r)^{-1}\|_2$. In `Method 2' we use Algorithm~\ref{zd:ALG:POD_W} with error constant $\eta_2 \equiv \sqrt{\kappa_2(W)} \| (\WSO^TQ_{\WU})^{-1}\|_2$ and in `Method 3' we use Algorithm~\ref{zd:ALG:POD_W-s} with error constant $\eta_3 \equiv \sqrt{\kappa_2(W_s)} \| (\WSO^TQ_{\WU})^{-1}\|_2$.

In Algorithm~\ref{zd:ALG:POD_W}, the GSVD of the snapshot matrix w.r.t. the weighting matrix $W$  was computed as follows. First, the weighted QR was computed using~\cite[Algorithm 2]{lowery2014stability} to obtain $Y = Q_Y R_Y$. Note that $Q_Y^TWQ_Y = \Id_{n_s}$. Then the SVD of $R_Y$ is computed as $R_Y = U_R\Sigma V^T$. We obtain the GSVD of $Y = {U}_Y \Sigma V^T$, where now ${U}_Y = Q_Y U_R$.  

\begin{figure}[!ht]\centering
$\quad$
\includegraphics[scale=0.3]{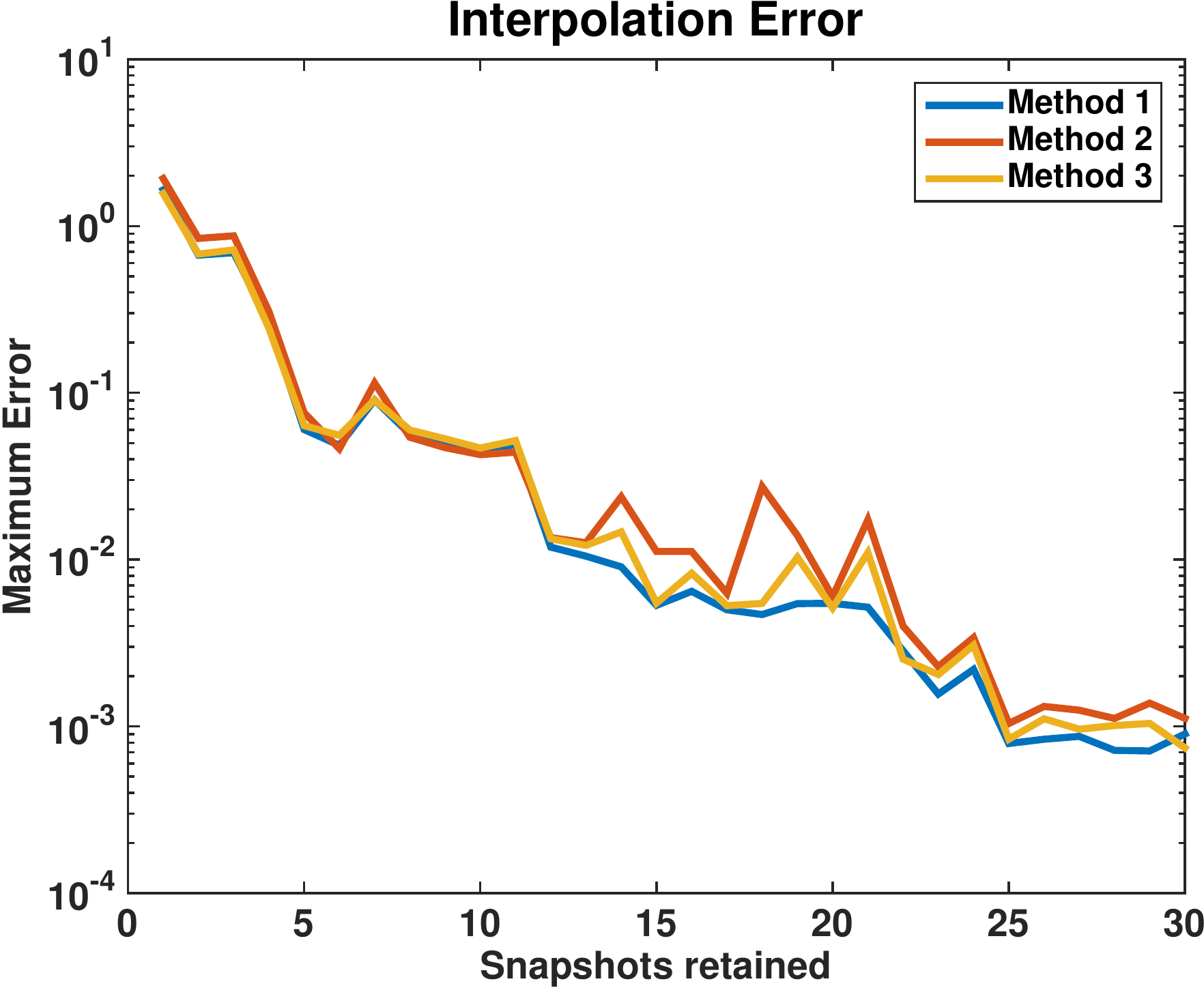}
\includegraphics[scale=0.3]{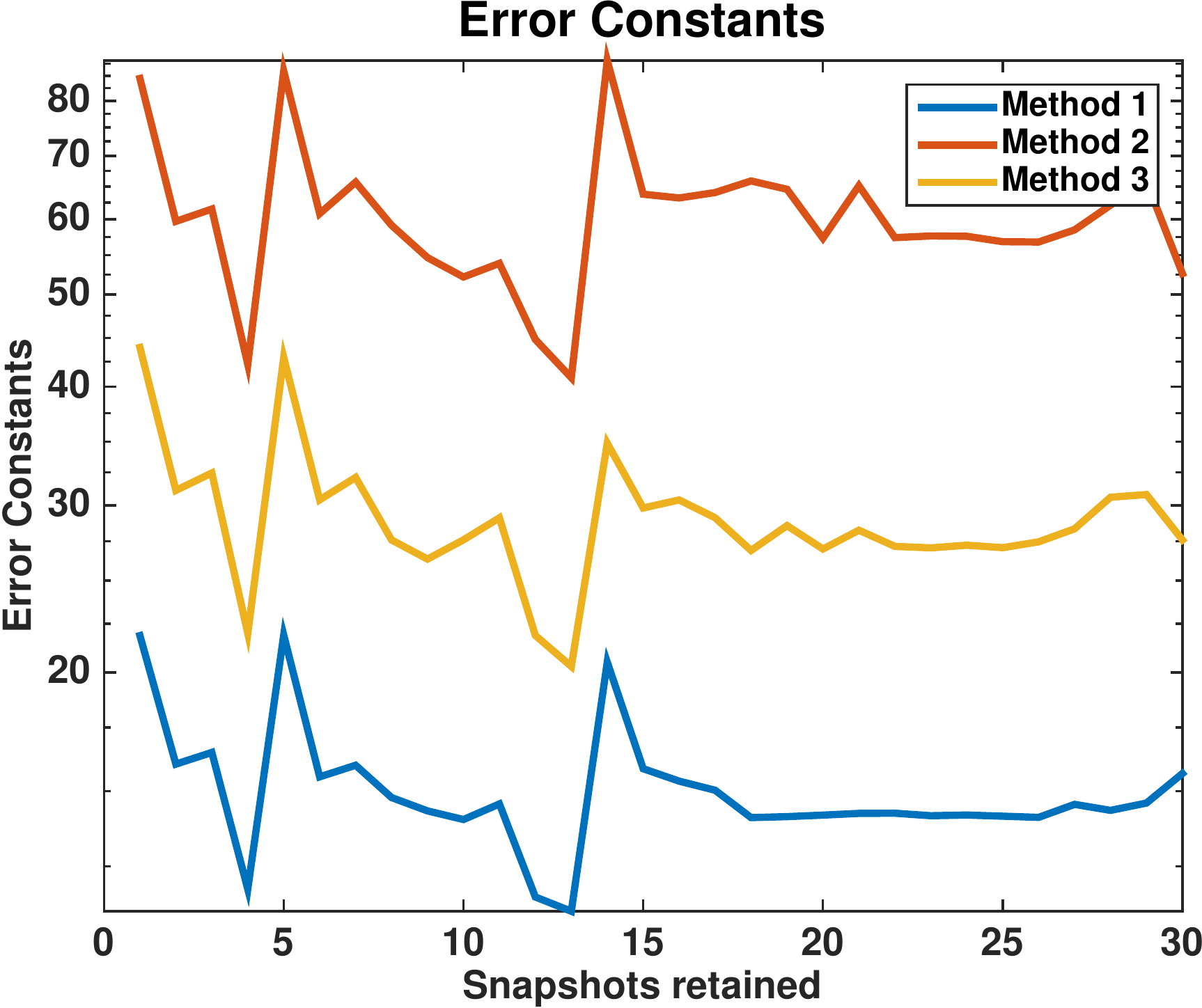}
\caption{(left) Maximum relative error as a function of number of basis vectors used in the DEIM approximation. (right) Error constants for the three methods as defined in Section~\ref{ss_ex_4}. }
\label{f_example4a}
\end{figure}

\begin{figure}[!ht]\centering
$\quad$
\includegraphics[scale=0.3]{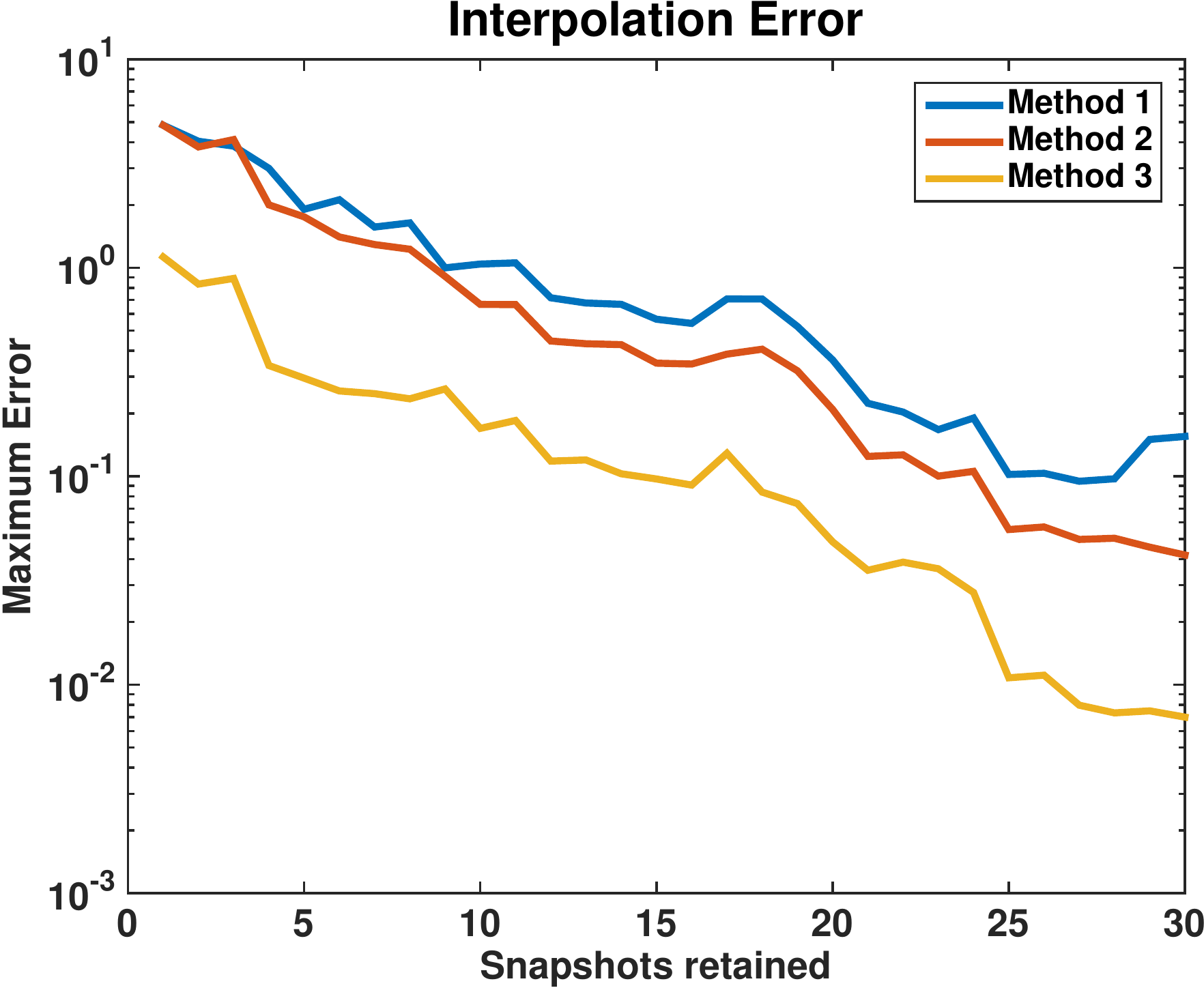}
\includegraphics[scale=0.3]{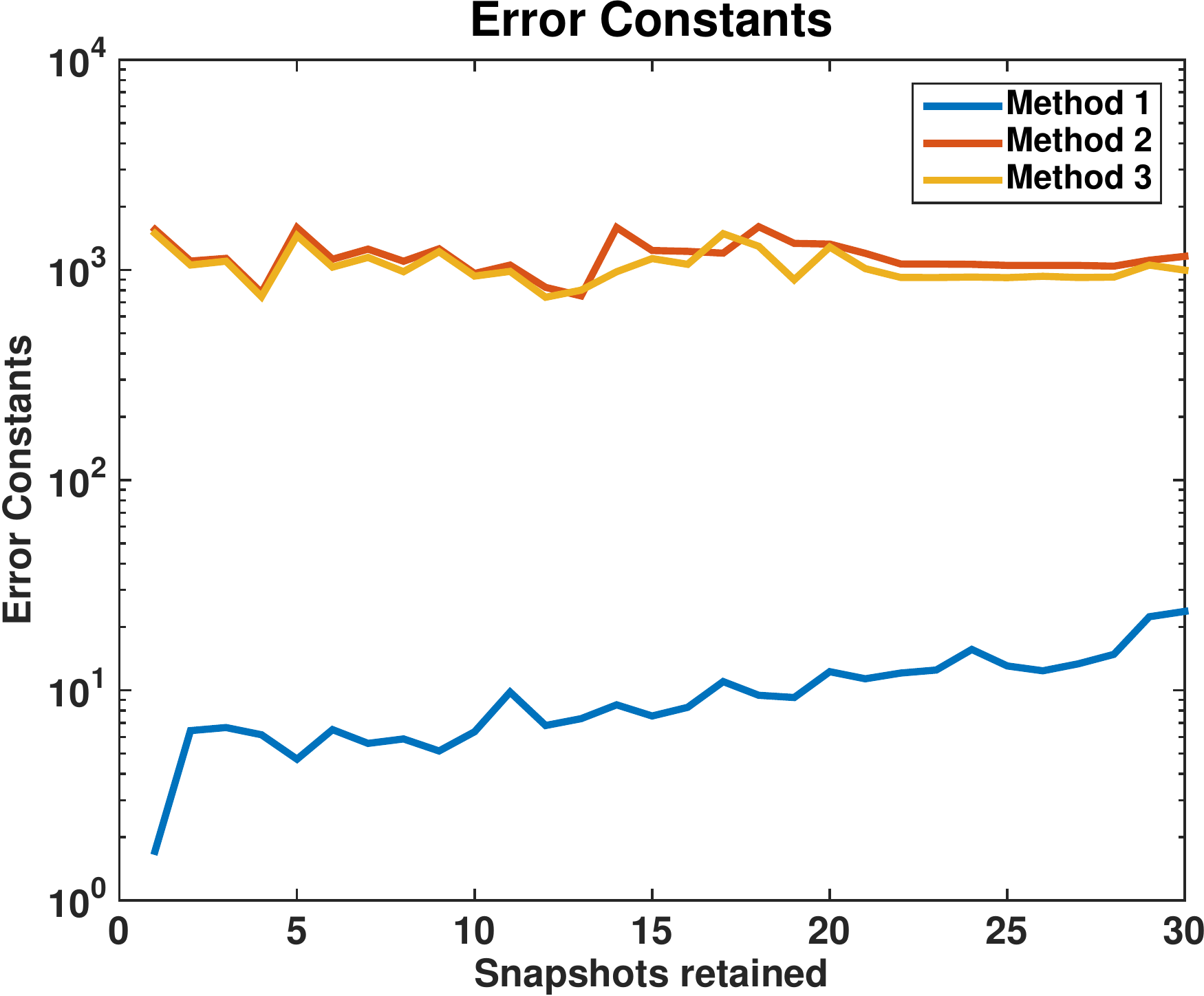}
\caption{(left) Maximum relative error as a function of number of basis vectors used in the DEIM approximation. (right) Error constants for the three methods as defined in Section~\ref{ss_ex_4}. }
\label{f_example4b}
\end{figure}

For a given weighting matrix, we define the relative error as 
\[ \text{Rel. Err.}_j = \frac{\| f - \D_j f\|_{W}}{\| f\|_{W}} \qquad j=1,2,3. \]
The DEIM operators $\D_j$ correspond to the different Methods described above. In Figure~\ref{f_example4a} we plot the relative error using the DEIM approximation and error constants; here the weighting matrix $W= W_2$ corresponds to the $L^2(\Omega)$ inner product. As can be seen, the overall interpolation error from all three methods are comparable. However, the error constants for Method 2 are highest as expected, since it involves $\sqrt{\kappa_2(W)}$. In Figure~\ref{f_example4b} we repeat the same experiment; however, the weighting matrix $W= W_3$ corresponds to the $H^1(\Omega)$ inner product. Our conclusions are similar to the previous weighting matrix. Note here that $W_3$ is more ill-conditioned than $W_2$ and furthermore, for $W= W_3$ we have that $\kappa_2(W) \approx \kappa_2 (W_s)$. Therefore, the difference between the error constants for Methods 2 and 3 is very small. 

In conclusion, for the application at hand, all three $W$-DEIM methods produce comparable results. Methods 2 and 3 maybe desirable if factorization of $W$ is computationally expensive, or even infeasible.

\subsection{Example 5} In this example, we consider a parameterized PDE based on~\cite[Section 8.5]{quarteroni2015reduced}. Consider the following parameterized PDE form defined on domain $\Omega = [0,1]^2$ with boundary $\partial \Omega$
\begin{eqnarray}
- \Delta u  + \boldsymbol{b}(\mu_1) \cdot \nabla u = &  s(\mathbf{x};\boldsymbol{\mu})  & \mathbf{x} \in \Omega\\
 \mathbf{n}\cdot \nabla u = & 0 & \mathbf{x} \in \partial \Omega.
\end{eqnarray}
Here $\boldsymbol{\mu} = [\mu_1,\mu_2,\mu_3]$,  and $\mathbf{n}$ is the normal vector. The wind velocity $\mathbf{b}(\mu_1)$ is taken to be as  $\boldsymbol{b} = [\cos\mu_1,\sin\mu_1]$, which is a constant in space but depends nonlinearly on the parameter $\mu_1$. The source term $s(\boldsymbol{\mu})$ has the form of a Gaussian function centered at $(\mu_2,\mu_3)$ and spread $0.25$

\[ s(\mathbf{x};\boldsymbol{\mu}) = \exp\left( -\frac{(x_1-\mu_2)^2 + (x_2-\mu_3)^2}{0.25^2}\right).\]
The goal of this problem is to construct a reduced order model for the solution $u(\mathbf{x};\boldsymbol{\mu})$ in the domain $\Omega$ over the range of parameters $\mu_1 \in [0,2\pi]$, $\mu_2 \in [0.2,0.8]$ and $\mu_3 \in [0.15,0.35]$. A POD based approach is used to reduce the model of the parameterized PDE with DEIM/WDEIM approximation for the source term.

\begin{figure}[!ht]\centering
\includegraphics[scale=0.3]{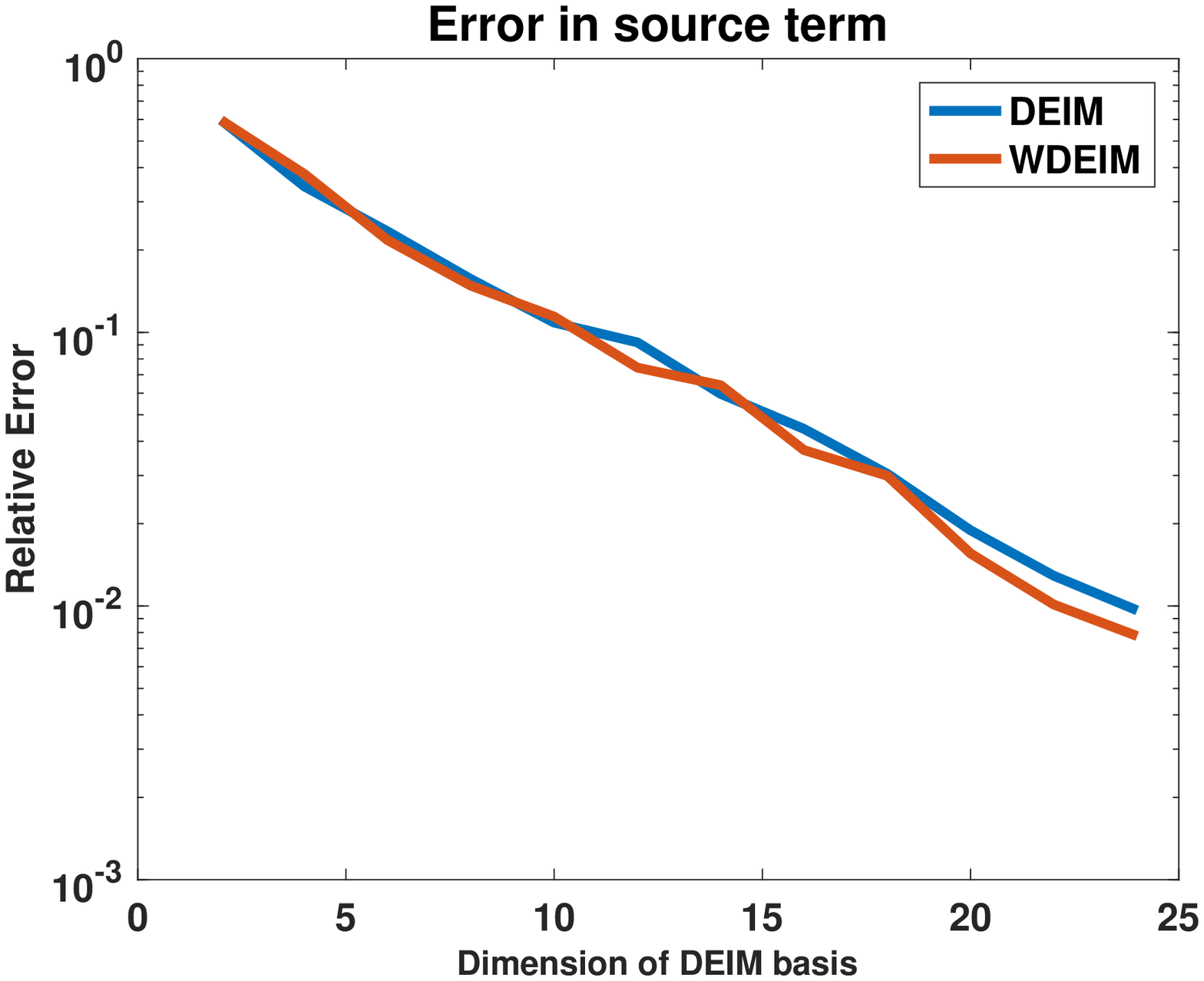}
\includegraphics[scale=0.3]{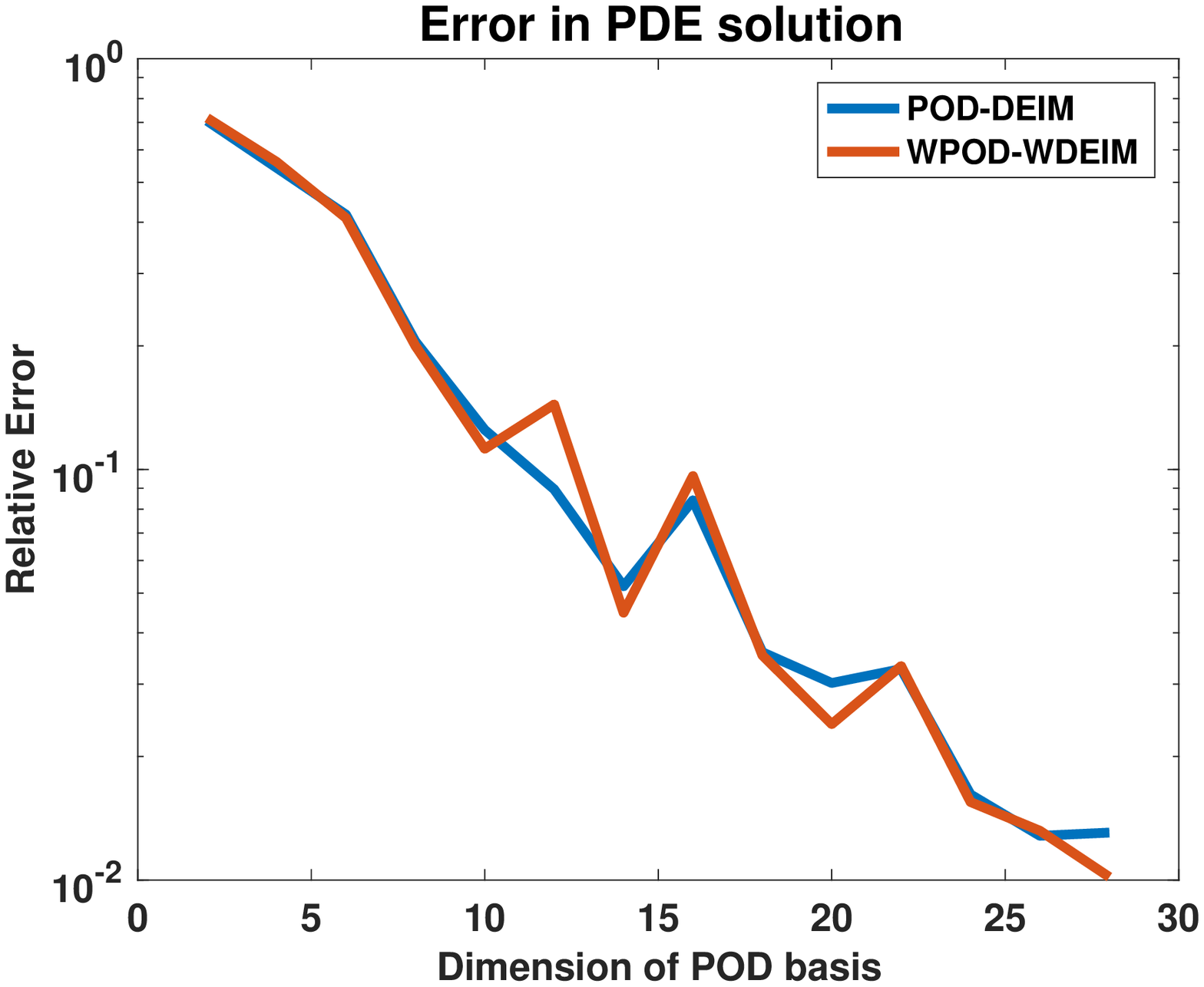}
\caption{(left) Error in the DEIM approximation and WDEIM approximations of the source term $s(\mathbf{x};\boldsymbol{\mu})$. (right)the error in the solution of $u(\mathbf{x};\boldsymbol{\mu})$. In both cases, the error is averaged over $10$ test samples.}
\label{f_ex5}
\end{figure}

As the weighting matrix $W$, we choose the arising from the discrete representation of the $H_1(\Omega)$ inner product. For constructing the WPOD and WDEIM bases, we first generated a training set of parameters $\boldsymbol\mu$ of $1000$ points generated by Latin Hypercube sampling; then the source term and the solution of the PDE is computed at each training point $\boldsymbol\mu$. The maximum dimension for the WPOD and WDEIM bases were chosen to be $20$ and $24$ respectively based on the decay of the singular values. From the same snapshot set we also compute bases for the POD and DEIM with dimensions $20$ and $24$ respectively. For both approaches, we use the PQR for computing for point selections. We report the errors used by both approaches in Figure~\ref{f_ex5}. The errors were averaged over $10$ different randomly generate samples in the parameter range. In the left panel, we compare the error in the DEIM approximation and WDEIM approximations of the source term $s(\mathbf{x};\boldsymbol{\mu})$, whereas in the right panel, we consider the error in the solution of $u(\mathbf{x};\boldsymbol{\mu})$ over the same test samples. For the right panel, the dimension of the DEIM/W-DEIM was chosen to be $24$ and the dimension of the POD/WPOD basis dimension was chosen to be $28$. All the errors were computed with the weighted norm $\|\cdot\|_W$. We see that the error in our approach (WPOD-WDEIM) is comparable with that of the POD-DEIM approach, whereas the error in the WDEIM approach is slightly better than the error in the error in the DEIM approach. 

\section{Conclusions}
The main contributions of this work are: \emph{(i)} it defines a new index selection operator, based on strong rank revealing QR factorization, that nearly attains the optimal DEIM projection error bound; \emph{(ii)} it facilitates the understanding of the canonical structure of the DEIM projection; \emph{(iii)} it establishes a core numerical linear algebra framework for the DEIM projection in weighted inner product spaces; \emph{(iv)}
it defines a discrete version of the Generalized Empirical Interpolation Method (GEIM). We believe that these will be useful for further  development of the DEIM idea and its applications in scientific computing. 

\section*{Acknowledgements} We are indebted to Ilse Ipsen and in particular to the two anonymous referees for constructive criticism and many suggestions that have improved the presentation of the paper.
\bibliography{bibDEIM}
\bibliographystyle{siam}
\end{document}